\documentclass[12pt]{article}
\usepackage{changepage}
\usepackage{mathtools}
\usepackage{url}
\usepackage{amsmath}
\usepackage{enumitem}
\usepackage{amsmath,bm}
\usepackage{amsfonts, amsmath, amssymb, amsthm, mathrsfs}
\usepackage{hyperref}
\usepackage{tikz}
\usepackage[labelsep=space]{caption}
\usepackage{float}

\newtheorem{theorem}{Theorem}
\newtheorem{claim}{Claim}
\newtheorem{lemma}{Lemma}
\newtheorem{observation}{Observation}

\begin{document}

%
\title{On the Ramsey number of a graph obtained by attaching a pendant to a path with length 1 modulo 4}
\author{Gabriel Razgon\thanks{King's College London. Email: \tt{gabriel.razgon@kcl.ac.uk}}}

\date{}
\maketitle

\abstract{In this paper, for odd $n$, we consider the graph $T_n$ with vertex set $\{x_1,....,x_n,x_{n+1}\}$ and edge set $\{x_{1}x_{2},x_{2}x_{3},...,x_{n-1}x_{n}\} \cup \{x_{\frac{n+1}{2}}x_{n+1}\}$ and prove that the Ramsey number $R(T_n,T_n)$ is equal to $\frac{3n+1}{2}$ for $n \in [1]_{4}$.}
\newline
\newline
\section{Introduction:}  


For each odd positive $n$, if $x_1,....,x_n$ are the consecutive vertices of the path $P_n$, let $T_n$ be the graph obtained from $P_n$ by adding a new vertex $w$ to $V(P_n)$ and
attaching an edge from the midpoint $x_{(n+1)/2}$ of $P_n$ to $w$.
\newline
If $P$ is a copy of $P_n$ contained in $T_n$, we denote $P$ as a \emph{spine} of $T_n$ and we denote the vertex $w$ of $V(T_n) \setminus V(P)$ as a \emph{pendant} of $T_n$. 
\newline
\newline
Recall that for simple graphs $H_1,...,H_k$, the Ramsey number $R(H_1,....,H_k)$ is the smallest integer $l$ such that for each $k$-edge-colouring $f$
of the clique $K_{l}$, if $G_1,...,G_k$ are the graphs induced by the respective colour classes of $f$, there exists $j \in \{1,...,k\}$ such that $H_j$ is a subgraph of $G_j$.
\newline
By the classical theorem of Ramsey, $R(H_1,...,H_k)$ is well-defined. 
See (\cite{Notes}) and (\cite{Book}) for proofs of this theorem and many other results of structural graph theory.
\newline
For a graph $H$, $R(H,H)$ is known as the Ramsey number of $H$.
\newline
\newline
Consider the class of trees $T$ such that there exists a non-empty path $P(T)$ in $T$ (a \emph{spine} of $T$) where each vertex of $V(T) \setminus V(P(T))$ is a leaf in $T$,
which is a class of graphs that contains all $T_n$.  The elements of this class of graphs are known as the caterpillars. 
\newline
\newline
The Ramsey numbers of all graphs in some classes of caterpillars such as the brooms (\cite{Broom1}, \cite{Broom2}), the caterpillars with vertex set 
\newline
$\{a_1,...,a_{2p}\} \cup \{v_1,...,v_n\} \cup \{w_1,w_2\}$ for $p \geq n \geq 2$ and edge set 
$\{a_{j}a_{j+1}:j \in \{1,...,2p-1\}\} \cup \{a_{1}v_j : j \in \{1,...,n\}\} \cup \{a_{2p}w_j : j \in \{1,2\}\} (\cite{Caterpillar1})$ and the caterpillars with vertex set $\{v_{0},...,v_{n-1}\}$ and edge set $\{v_{0}v_{1},...,v_{0}v_{n-4}\} \cup \{v_{1}v_{n-3},v_{1}v_{n-2},v_{2}v_{n-1}\}$ for $n \geq 6$ (\cite{Caterpillar2}) have been determined.
\newline
However, the problem of determining the Ramsey number of a general caterpillar is currently open, and there are few results on the exact Ramsey numbers of non-broom caterpillars $T$
with a spine $P(T)$ with endpoints $u$ and $v$ where at least one vertex of $P(T)$ is not adjacent to $u$ or $v$ and is adjacent to a leaf in $T$, which is a class of caterpillars that contains $T_n$ for $n \geq 5$. 
\newline
\newline
In light of the above review, in this 3-section paper, we prove the following theorem, where we determine the exact Ramsey number of the caterpillar $T_n$ for all $n \in [1]_{4}$ in terms of $n$.
\newline
Our theorem extends the result in \cite{Asymptote}, where it is proved that there exists a positive constant $o$ such that our theorem for $T_n$ holds for all $n>o$.
\newline
\newline
\begin{theorem}
The Ramsey number $R(T_n,T_n)$ is equal to $\frac{3n+1}{2}$ for all $n \in [1]_{4}$.
\newline
\newline
\end{theorem}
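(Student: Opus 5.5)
We prove the two inequalities $R(T_n,T_n) \geq \frac{3n+1}{2}$ and $R(T_n,T_n) \leq \frac{3n+1}{2}$ separately. Throughout, $n = 4k+1$. Then $T_n$ has $n+1 = 4k+2$ vertices, its spine has $n$ vertices, and $\frac{3n+1}{2} = 6k+2$.

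\textbf{Lower bound.} We give a $2$-colouring of $K_{6k+1}$ with no monochromatic $T_n$. We use the classical extremal colouring for paths. Split the vertices into a set $A$ with $|A| = \frac{n+1}{2}+ \ldots$, tuned so that neither colour class contains $n+1$ vertices in a suitable connected structure. Concretely, take $A$ with $|A| = 4k+1 = n$ and $B$ with $|B| = 2k$. Colour every edge inside $A$ red, and every other edge (inside $B$ and between $A$ and $B$) blue.

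\emph{Red.} The red graph is $K_n$ together with isolated vertices. Since $T_n$ is connected on $n+1$ vertices, there is no red $T_n$.

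\emph{Blue.} The blue graph is $K_{2k}$ joined completely to an independent set of size $4k+1$. A blue copy of $T_n$ is a tree, so it is bipartite. Its independent vertices from $A$ can only be adjacent to vertices of $B$. In the tree $T_n$, any set $S$ of vertices forming an independent set has at most $|N(S)|+\ldots$ vertices. More precisely, removing the $|B \cap V(T)| \le 2k$ vertices of $B$ must leave at least $4k+2-2k = 2k+2$ components, one for each $A$-vertex. But deleting $j$ vertices from $T_n$ leaves at most $j+1$ components, plus one extra if a deleted vertex is the centre $x_{(n+1)/2}$, which has degree $3$. So at most $2k+2$ components can arise, and equality forces the $B$-vertices to hit precisely a perfect alternating pattern. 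This in turn forces the spine, which has $4k+1$ vertices, to alternate $A,B,A,\ldots$ with $2k+1$ vertices from $A$ and $2k$ from $B$, and the centre $x_{2k+1}$ to lie in $A$ (it sits at an odd position). The pendant is adjacent to the centre, so it must lie in $B$, but all of $B$ is already used. Hence there is no blue $T_n$.

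This is where the condition $n \equiv 1 \pmod 4$ enters: it makes the centre fall at an odd position. If this parity argument turns out not to be tight, the fallback is to adjust $|A|, |B|$ by one.

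\textbf{Upper bound.} Let $N = 6k+2$ and take a red/blue colouring of $K_N$. We may invoke the known path Ramsey number $R(P_m,P_m) = m + \lfloor m/2 \rfloor - 1$ (Gerencsér–Gyárfás). It gives a monochromatic path, say red, on $\lfloor 2N/3 \rfloor + 1 \ge n$ vertices. Take a longest red path $P = y_1\ldots y_m$ with $m \ge n$.

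\emph{Case $m \ge n+2$.} Any vertex of $P$ at position $i$ with $i-1 \ge 2k$ and $m-i \ge 2k$ can serve as a centre. If such a centre has a red edge to a vertex off the subpath, we obtain a red $T_n$, using the extra path vertices as the pendant when possible. Otherwise the candidate centres are blue to everything outside, and we build a blue $T_n$ between the middle segment and the rest of the vertices.

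\emph{Case $m \in \{n, n+1\}$.} Let $R = V \setminus V(P)$, so $|R| \ge 2k$. Each vertex near the middle of $P$, namely $y_{2k+1}$ and its shifted analogues, sends only blue edges to $R$. Maximality of $P$ further forces the endpoints $y_1, y_m$ to send blue edges to $R$. We then assemble a blue spine that alternates between $R$ and middle/endpoint vertices of $P$, and attach a pendant from $R$ or from $P$.

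\textbf{Main obstacle.} The main obstacle is this final case: counting exactly enough blue-forced vertices of $P$ to alternate with $R$. I would handle it by a rotation (Pósa-type) argument, which enlarges the set of vertices that can act as the centre of $P$. This yields about $2k+1$ blue-forced vertices in $P$, and together with $|R| \ge 2k$ these form a blue bipartite structure large enough to contain $T_n$. The parity $n \equiv 1 \pmod 4$ is needed again here to balance the two sides.
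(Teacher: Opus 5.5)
Your lower-bound colouring works and is essentially the paper's. The paper gives the edges inside the small part the same colour as those inside $A$, so its other class is exactly $K_{n,(n-1)/2}$. Making $B$ a blue clique changes nothing, since only the independence of $A$ and $|B|=2k$ matter. Your component count is self-contradictory, though: equality in ``at most $j+1$ components, plus one if the centre is deleted'' requires the centre to lie in $B$, yet you conclude it lies in $A$. Argue on the spine directly instead. Its $4k+1$ vertices include at most $2k$ from $B$, hence at least $2k+1$ pairwise non-adjacent $A$-vertices, which must be exactly the odd positions. So the centre $x_{2k+1}$ is in $A$, and the pendant fits neither in $A$ nor in the exhausted $B$.

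The upper bound, however, has a genuine gap: neither case actually constructs a blue $T_n$. For $m\ge n+2$, ``build a blue $T_n$ between the middle segment and the rest'' is not an argument. For $m=n+1$, the facts you list force only $y_1,y_{2k+1},y_{2k+2},y_{4k+2}$ to be blue-complete to $R$, where $|R|=2k$. (The case $m=n$ cannot occur, since Gerencs\'er--Gy\'arf\'as already gives $m\ge n+1$.) Your claim that P\'osa rotations raise this to about $2k+1$ vertices is unsupported: a rotation needs a red chord from the current endpoint into $P$, and nothing forces such chords to exist. Even granting it, $2k+1$ vertices blue-complete to a $2k$-set give only $K_{2k+1,2k}$, which is exactly your own extremal colouring and contains no $T_n$. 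The pendant needs a specific extra blue edge at the centre (in the paper, $v_{(n+1)/2}v_{n+1}$), which your sketch does not supply.

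The missing idea is that no vertex of $P$ needs to be blue-complete to $R$. By maximality, each $r\in R$ is blue to at least one of any two consecutive vertices of $P$. A crossing argument (the paper's Claim 1) then shows that any two outside vertices $c_1,c_2$ have a common blue neighbour among any four consecutive vertices $l_t,\ldots,l_{t+3}$ of $P$. Otherwise the forced red edges give, up to symmetry, the longer red path $\ldots l_t\,c_1\,l_{t+2}\,l_{t+1}\,c_2\,l_{t+3}\ldots$. Inductively this yields blue paths alternating between $R$ and a prescribed segment of $P$ and ending in $R$ (Lemma 2).

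For $m=n+1$, two such paths of $(n-1)/2$ vertices each are built: one on $v_1\ldots v_{(n-3)/2}$, the other on $v_{(n+3)/2}\ldots v$ with $v\in\{v_{n-1},v_n\}$, each using $(n-1)/4$ vertices of $R$. They are joined through $v_{(n+1)/2}$, which, as the centre of $v_1\ldots v_n$, is blue to everything off that subpath and so also supplies the pendant $v_{n+1}$.

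For $m=n+1+z$ with $z\ge 1$ you additionally need the blue zigzag $v_{n+2}v_{(n+3)/2}v_{n+3}v_{(n+5)/2}\cdots$ (Lemma 1). It is blue because $v_{j-(n+1)/2}$ is the centre of an $n$-subpath of $P$ avoiding $v_j$ and $v_{j+1}$. When $z\le (n-1)/4$ the zigzag is spliced with Lemma 2 paths. When $z>(n-1)/4$ it is closed off near $v_1$ (by the edge $v_nv_1$ if $z=(n-1)/2$, otherwise by a Lemma 2 path), so that the centre lies inside the zigzag and $v_{(n+1)/2}$ serves as the pendant. These counts need $n>5$, so $n=5$ must be checked separately.
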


We prove Theorem 1 as follows.
Note that $n \in [1]_{4}$ throughout the paper. 
\newline
In Section \ref{sec:2}, we prove that $R(T_n,T_n) \leq \frac{3n+1}{2}$ for $n>5$,
and in Section \ref{sec:3}, we prove that $R(T_n,T_n) \geq \frac{3n+1}{2}$ for all $n$ and $R(T_n,T_n)=\frac{3n+1}{2}$ for $n \leq 5$.
\newline
\newline
The following notation will be used throughout the proof.
\newline
For a tuple $T$ of vertices of a graph $H$, if there exists a path in $H$ with tuple of consecutive vertices $T$, let $P_{T,H}$ denote this path.
For any tuple $T$,  let $T(j)$ be the $j$th entry of $T$, and for any two tuples $A=(a_1,..,a_{n_1})$, $B=(b_1,...,b_{n_2})$, we define $A+B$ to be $(a_1,...,a_{n_1},b_1,...,b_{n_2})$ .

\section{$\mathbf{R(T_n,T_n) \leq \frac{3n+1}{2}}$ for $\mathbf{n>5}$.} \label{sec:2}

Let $r$ be $\frac{3n+1}{2}$, let $f$ be an arbitrary 2-edge-colouring of $K_r$ with colour classes $C_1,C_2$ and for each $x \in \{1,2\}$, let $G_x$ be the graph $(V(K_r),C_x)$. Since $R(P_n,P_n)$ is known to be $\lfloor \frac{3n-2}{2} \rfloor$ for $n \geq 2$ (\cite{PathRamseyNumber}),
it follows that some $G_j$ contains a copy of $P_{n+1}$. 
\newline
\newline
Let $P$ be a longest path in $G_j$ (which has length at least $n+1$) and assume without loss of generality that $j=1$. 
Since the statement is proved if $T_n$ is a subgraph of $G_1$, henceforth assume that $T_n$ is not isomorphic to any subgraph of $G_1$. 
\newline
We prove that $T_n$ is a subgraph of $G_2$, which implies that $K_r$ contains a monochromatic copy of $T_n$.
This implies that $R(T_n,T_n) \leq r$ by definition. 
\newline
\newline
Let $T=(v_1,.....,v_{|V(P)|})$ be the tuple of consecutive vertices of $P$, and let $Q$ be $V(K_r) \setminus V(P)$.
\newline
Let $P'$ be $P_{(v_1,...,v_{n+1}),G_1}$ and let $z$ be $|V(P)|-(n+1)$, which is at most $(n-1)/2$ as $|V(G_1)| \leq \frac{3n+1}{2}$. 
\newline
We initially prove the following auxiliary lemmas in Subsection \ref{subsec:2.1}.
\newline
\newline
In Lemma \ref{lem:1}, we prove that for the tuple 
$V=(v_{n+2},v_{(n+2)-(n+1)/2})+
\newline
+(v_{n+3}, v_{(n+3)-(n+1)/2})+,...,+(v_{n+z},v_{n+z-(n+1)/2})+(v_{n+z+1},v_{n+z+1-(n+1)/2})$,
the $2z$-path $P_{V,G_2}$ exists if $z \neq 0$. 
\newline 
\newline
In Lemma \ref{lem:2}, we prove the following.
Let $P_{L,G_1}$ be a subpath of $P$ where $L=(l_1,...,l_k)$ and let $C$ be a subset of $Q$ with $|C| \geq \lfloor (k+1)/2 \rfloor$  such that if $k \geq 3$, at least two elements of $C$ are adjacent to $l_1$ and if $k<3$, some element of $C$ is adjacent to $l_1$.
Then, if $k'=\lfloor (k+1)/2 \rfloor$, $P_{M,G_2}$ exists for some tuple  $M=(m_1,....,m_{2k'})$ of elements of $V(K_r)$ such that $m_j \in V(P_{L,G_1})$ iff $j$ is odd and $m_j \in C$ otherwise
and $m_1 \in \{l_1,l_2,l_3\}$. 
\newline
\newline
Subsequently, in Subsection \ref{subsec:2.2}, we consider several cases with respect to the value of $z=|V(P)|-(n+1)$ and apply the above lemmas to each of these cases to construct a copy of $T_n$ in $G_2$.
\newline
\newline
\subsection{Two auxiliary lemmas.}\label{subsec:2.1}
\begin{lemma}\label{lem:1}
If $z \neq 0$ and  
$V$ is the tuple 
$(v_{n+2},v_{(n+2)-(n+1)/2})+
\newline
+(v_{n+3},v_{(n+3)-(n+1)/2})+,...,+
(v_{n+z},v_{n+z-(n+1)/2})+(v_{n+z+1},v_{n+z+1-(n+1)/2})$,
the $2z$-path $P_{V,G_2}$ exists.
\end{lemma}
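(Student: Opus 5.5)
The plan is to show that every pair of consecutive entries of $V$ is an edge of $G_2$. Since $f$ colours every edge of $K_r$, this is the same as showing that none of these pairs is an edge of $G_1$. To do this, I will show that any such $G_1$-edge, together with a suitable subpath of $P$, gives a copy of $T_n$ in $G_1$, which contradicts our standing assumption. Throughout, write $m=(n+1)/2$, so that $n=2m-1$ and $|V(P)|=n+1+z$.

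First I check that $V$ is a genuine tuple of $2z$ distinct vertices. The odd entries are $v_j$ for $j\in[n+2,n+z+1]$. The even entries are $v_{j-m}$ for the same $j$, so their indices lie in $[n+2-m,\,n+z+1-m]$. Since $z\le (n-1)/2$, we have $n+z+1-m\le n+\frac{n-1}{2}+1-\frac{n+1}{2}=n<n+2$. Hence the two index ranges are disjoint, and within each range the indices are distinct. All indices also lie in $[1,|V(P)|]$, because $n+2-m\ge 1$ and $n+z+1=|V(P)|$.

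The consecutive pairs of $V$ come in two types:
\begin{itemize}
\item the pairs $v_j v_{j-m}$ for $j\in[n+2,n+z+1]$;
\item the pairs $v_{j-m}v_{j+1}$ for $j\in[n+2,n+z]$.
\end{itemize}
Both types are handled by one observation. Let $c$ and $d$ be indices with $m\le c$, $c+m\le d\le |V(P)|$ and $c\ge n+2-m$, and suppose $v_cv_d\in E(G_1)$. Consider the subpath of $P$ with consecutive vertices $v_{c-m+1},\dots,v_{c+m-1}$. It has $2m-1=n$ vertices, and its midpoint is $v_c$. All indices are valid: $c-m+1\ge n+3-2m=2$, and $c+m-1<d\le|V(P)|$. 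Moreover $v_d$ is not on this subpath, since $d>c+m-1$. So this subpath is a spine in $G_1$, and the edge $v_cv_d$ attaches the pendant $v_d$ to its midpoint. This gives a copy of $T_n$ in $G_1$, a contradiction.

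It remains to apply the observation to each type of pair:
\begin{itemize}
\item For a first-type pair, take $c=j-m$ and $d=j$. Then $d-c=m$, $c\ge n+2-m$, and $d\le n+z+1$, so the conditions hold.
\item For a second-type pair, take $c=j-m$ and $d=j+1$. Then $d-c=m+1$, and $d\le n+z+1=|V(P)|$ because $j\le n+z$, so the conditions hold again.
\end{itemize}
Therefore every consecutive pair of $V$ is an edge of $G_2$, and $P_{V,G_2}$ exists.

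The only delicate point is the index bookkeeping. We need the spine to stay inside $P$ at the left end, which is where $j\ge n+2$ (so $c-m+1\ge 2$) is used. We also need the pendant to avoid the spine, which is where $d-c\ge m$ is used. The distinctness of the entries of $V$ uses the bound $z\le(n-1)/2$.
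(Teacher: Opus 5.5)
Your proof is correct and follows essentially the same argument as the paper. In both, a $G_1$-edge $v_cv_d$ with $d\ge c+(n+1)/2$ would attach a pendant to the midpoint of the $n$-vertex subpath of $P$ centred at $v_c$, and $z\le (n-1)/2$ makes the two index ranges disjoint. The only difference is presentational: you handle both types of consecutive pair with one observation, whereas the paper treats $v_jv_{j-(n+1)/2}$ first and then notes that $v_{j+1}$ also lies off the same spine.
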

\begin{proof}
Suppose that for some $j \in \{n+2,....,(n+1)+z\}$, $v_j$ and $v_{j-(n+1)/2}$ are not adjacent in $G_2$. 
Then, by definition, $e=v_{j}v_{j-(n+1)/2}$ is an edge of $G_1$. 
Let $k$ be $j-(n+1)/2$.
Since $j \geq (n+2)$, each element of $\{v_{k-1},...,v_{k-(n-1)/2}\} \cup \{v_{k+1},...,v_{k+(n-1)/2}\}$ is well-defined.
Therefore, if $L=(v_{k-(n-1)/2},....,v_k,.....,v_{k+(n-1)/2})$, the path $P_{L,G_1}$ is a well-defined subpath of the path $P$ in $G_1$
(which has consecutive vertices $v_1,....,v_{(n+1)+z}$), which adheres to the following constraints.
\newline
$P_{L,G_1}$ has odd length $n$, midpoint $v_k$ and does not contain $v_j=v_{k+(n+1)/2}$. 
\newline
Therefore, the subgraph $G'=(V(P_{L,G_1}) \cup \{v_j\}, E(P_{L,G_1}) \cup \{v_{k}v_{j}\})$ of $G_1$ is a copy of $T_n$ with spine $P_{L,G_1}$ and 
pendant $v_j$, which is a contradiction, as by assumption, $G_1$ contains no copy of $T_n$.
\newline
\newline
Therefore, $v_{j}v_{j-(n+1)/2} \in E(G_2)$. 
If $j<n+z+1$, $v_{j+1}$ is a vertex of $V(P)$ such that $v_{j+1}$ is not contained in the path $P_{L,G_1}$ defined in the previous paragraph. 
Therefore, we obtain a contradiction if $v_{j-(n+1)/2}v_{j+1} \in G_1$.
\newline
This implies that for all $j<n+z+1$ with $j \geq n+2$, 
\newline
$\{v_{j}v_{j-(n+1)/2},v_{j-(n+1)/2}v_{j+1}\} \subseteq E(G_2)$. 
\newline
Observe also that since $z \leq (n-1)/2$, the sets $\{v_{n+2},...,v_{n+z+1}\}$ and $\{v_{n+2-(n+1)/2},....,v_{n+z+1-(n+1)/2}\}$ are disjoint.
Therefore, the $2z$-path $P_{V,G_2}$ exists. 
\newline
\newline
\end{proof}
\begin{lemma}\label{Lemma 2}\label{lem:2}
Let  $P_{L,G_1}$ be a subpath of $P$ where $L=(l_1,...,l_k)$. Let $C$ be a subset of $Q$ with $|C| \geq \lfloor (k+1)/2 \rfloor$ such that if $k \geq 3$, at least two elements of $C$ are adjacent to $l_1$ and if $k<3$, some element of $C$ is adjacent to $l_1$.
Then, if $k'=\lfloor (k+1)/2 \rfloor$, $P_{M,G_2}$ exists for some tuple  $M=(m_1,....,m_{2k'})$ of elements of $V(K_r)$ such that $m_j \in V(P_{L,G_1})$ iff $j$ is odd and $m_j \in C$ otherwise
and $m_1 \in \{l_1,l_2,l_3\}$. 
\end{lemma}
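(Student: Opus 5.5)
The plan is to build the alternating $G_2$-path greedily along $L$, driven by a single structural fact about the longest path $P$ in $G_1$. Throughout, adjacency between an element of $C$ and a vertex of $L$ means adjacency in $G_2$, as in the hypothesis. The fact is:

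\emph{(insertion)} no $q\in Q$ is $G_1$-adjacent to two consecutive vertices $v_i,v_{i+1}$ of $P$. Otherwise replacing the edge $v_iv_{i+1}$ by $v_iq\,v_{i+1}$ gives a longer path in $G_1$, contradicting the choice of $P$. Since $K_r$ is complete, every $c\in C$ is therefore $G_2$-adjacent to at least one of $l_i,l_{i+1}$, for every $i<k$. Equivalently, $c$ is $G_2$-adjacent to $l_{i+1}$, or to both $l_i$ and $l_{i+2}$.

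\textbf{Step 1 (small $k$).} For $k\le 2$ we have $k'=1$. Take $m_1=l_1$ and $m_2$ a neighbour of $l_1$ in $C$, which exists by hypothesis.

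\textbf{Step 2 (induction on $k$ in steps of two).} Maintain the following invariant after $t$ rounds. We have an alternating $G_2$-path $m_1c_1m_3c_2\cdots m_{2t-1}c_t$ using only vertices of $\{l_1,\dots,l_{2t+1}\}$ on the odd positions, with $m_1\in\{l_1,l_2,l_3\}$, and its last vertex $c_t\in C$ is $G_2$-adjacent to a still-unused vertex of $\{l_{2t},l_{2t+1}\}$, or at least to one of $l_{2t+1},l_{2t+2}$. To extend, pick a fresh $c_{t+1}\in C$; the bound $|C|\ge k'$ guarantees one exists until $t=k'$. We need a common $G_2$-neighbour of $c_t$ and $c_{t+1}$ in the next window $\{l_{2t+1},l_{2t+2},l_{2t+3}\}$. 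The insertion fact gives each of them either the middle vertex $l_{2t+2}$ or both outer vertices $l_{2t+1},l_{2t+3}$. Hence a common neighbour exists unless, say, $c_t$ sees exactly $\{l_{2t+1},l_{2t+3}\}$ and $c_{t+1}$ sees exactly $\{l_{2t+2}\}$.

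\textbf{Step 3 (the bad configuration; main obstacle).} This is where I expect the real work. In the bad case $c_t$ is $G_1$-adjacent to $l_{2t+2}$, and $c_{t+1}$ is $G_1$-adjacent to both $l_{2t+1}$ and $l_{2t+3}$. I would first try the following.

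1. Swap the roles of $c_t$ and $c_{t+1}$, or choose $c_{t+1}$ differently among the remaining elements of $C$.
2. Use the edge $c_tc_{t+1}$. If it lies in $G_1$, then $\dots l_{2t+1}\,c_{t+1}\,c_t\,l_{2t+2}\dots$ is a longer $G_1$-path, a contradiction. So $c_tc_{t+1}\in E(G_2)$. This does not directly enter $M$, since $C$-vertices must not be consecutive in $M$, but it can be used to re-route: $c_{t+1}$ can take over the role of $c_t$.
3. Shift the parity of the window, letting $m_{2t+1}$ be $l_{2t+2}$ rather than $l_{2t+1}$. This is exactly the slack built into the statement, which allows $m_1\in\{l_1,l_2,l_3\}$ and $2k'$ vertices from only $k$ path vertices.

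At the start, the hypothesis that $l_1$ has two $G_2$-neighbours in $C$ when $k\ge3$ is what lets us initialise the invariant. We choose which of the two neighbours becomes $c_1$ so that the first window is not bad, falling back on $m_1\in\{l_2,l_3\}$ if needed.

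\textbf{Step 4 (counting).} Finally, I would check that the windows consume at most $k$ vertices of $L$ while producing $k'$ odd-position vertices, and that each round uses a new element of $C$. This gives the required tuple $M=(m_1,\dots,m_{2k'})$ with odd entries in $V(P_{L,G_1})$, even entries in $C$, and $m_1\in\{l_1,l_2,l_3\}$.
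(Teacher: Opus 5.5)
\textbf{There is a genuine gap, and it is the step you flagged yourself.} Step~3 lists things to try but resolves nothing. Moreover, the facts your proposal actually establishes are provably too weak to finish. Those facts are single insertion (every $c\in C$ is $G_2$-adjacent to $l_i$ or $l_{i+1}$ for each $i$) and $c_tc_{t+1}\in E(G_2)$. Take $k=5$, so $k'=3$, and $C=\{c_1,c_2,c_3\}$. Among the vertices of $L$, let $c_1,c_2$ be $G_2$-adjacent to exactly $l_1,l_3,l_5$ and $c_3$ to exactly $l_2,l_4$, and put all edges inside $C$ in $G_2$. This satisfies the hypotheses of the lemma and everything you derive, yet no valid $M$ exists. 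Whether $c_3$ is $m_2$, $m_4$ or $m_6$, it is consecutive in $M$ with a vertex of $L$ that is also consecutive with $c_1$ or $c_2$. But $c_3$ has no common $G_2$-neighbour with either. So no re-choice of $c_{t+1}$, no use of the edge $c_tc_{t+1}$ (which can never appear in $M$), and no parity shift can close Step~3 without a new ingredient.

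\textbf{The missing ingredient is a second exchange argument.} Suppose distinct $c,c'\in Q$ are $G_1$-adjacent to $l_i,l_{i+2}$ and to $l_{i+1},l_{i+3}$ respectively. Then $\ldots l_i\,c\,l_{i+2}\,l_{i+1}\,c'\,l_{i+3}\ldots$ is a path longer than $P$. Together with single insertion this yields Claim 1 of the paper: any two vertices of $Q$ have a common $G_2$-neighbour among any \emph{four} consecutive vertices of $P$. This rules out the example above. Three consecutive vertices are not enough, since your bad configuration is compatible with maximality of $P$ away from its ends. So extending by one $C$-vertex per window of three cannot work as you set it up.

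\textbf{How the paper proceeds instead.} It inducts on $k$ in steps of four, adding two fresh $u,v\in C$ at once. Let $w$ be the last $C$-vertex of the path already built on $l_1,\ldots,l_{k-4}$. By Claim 1, every pair from $\{u,v,w\}$ has a common $G_2$-neighbour in $\{l_{k-3},\ldots,l_k\}$. If $u,v$ have two common neighbours there, one appends $(y,u,x,v)$, with $y$ a common neighbour of $w,u$ and $x\neq y$ a common neighbour of $u,v$. If $u,v$ have only one common neighbour $x$, say in $\{l_{k-3},l_{k-2}\}$, then each of $u,v,w$ has a $G_2$-neighbour in $\{l_{k-1},l_k\}$. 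Pigeonhole forces $w$ and one of $u,v$, say $c_2$, to share a neighbour $y$ there, and one appends $(y,c_2,x,c_3)$ with $c_3$ the other one.

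\textbf{Your counting also needs repair.} Your invariant lets round $t$ reach $l_{2t+1}$, so the last round would need $l_{2k'+1}$, although $k\le 2k'$. The paper balances the budget in the base cases, which is where the hypothesis at $l_1$ is spent. For $k\le 2$ it takes $(l_1,m_2)$. For $k\in\{3,4\}$ it takes $(l_t,m_2,l_1,m_4)$, with $m_2,m_4$ the two $G_2$-neighbours of $l_1$ in $C$ and $l_t\in\{l_2,l_3\}$ a $G_2$-neighbour of $m_2$. Each later step then spends exactly four vertices of $L$ on two new vertices of $C$.
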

\begin{proof}
Assume w.l.o.g that $(l_1,...,l_k)$ is a subtuple of $(v_1,...,v_{|V(P)|})$, as the lemma holds by symmetry if $(l_k,...,l_1)$ is a subtuple of $(v_1,...,v_{|V(P)|})$.
\newline
We prove the lemma by induction on $k$. 
\newline
For $k=1$ and $k=2$, if $m_2$ is an element of $C$ adjacent to $l_1$,
$P_{(l_1,m_2),G_2}$ exists and adheres to the given constraints.
\newline
For $k=3$ and $k=4$, let $m_2$ and $m_4$ be two elements of $C$ adjacent to $l_1$.
\newline
If $m_2$ is adjacent to both $l_2$ and $l_3$ in $G_1$,
since $P_{L,G_1}$ is a subpath of the path $P$ of $G_1$, which has consecutive vertices $v_1,...,v_{|V(P)|}$,
the path $P_{(v_1,....,l_2)+(m_2)+(l_{3},...,v_{|V(P)|}),G_1}$ is a path in $G_1$ of longer length than the $|V(P)|$-path $P$,
which is a contradiction as $P$ is a longest path of $G_1$. 
\newline
Therefore, in $G_2$, $m_2$ is adjacent to $l_t$ for some $t \in \{2,3\}$,
implying that $P_{(l_t,m_2,l_1,m_4),G_2}$ exists. 
\newline
Since the path $P_{(l_t,m_2,l_1,m_4),G_2}$ has length $4=2*\lfloor (4+1)/2 \rfloor=2*\lfloor (3+1)/2 \rfloor$,
it adheres to the given constraints.
\newline
\newline
For $k>4$, assume that the lemma holds for all smaller positive integers than $k$.
For $k>4$, let $L'$ be $(l_1,...,l_{c})$ for $c=k-4$. 
\newline
\newline
By the induction assumption, $P_{M',G_2}$ exists for some tuple $M'=(m_1,...,m_{2*\lfloor (c+1)/2 \rfloor})$ of elements of $V(K_r)$ such that $m_j \in V(P_{L',G_1})$ if $j$ is odd and $m_j \in C$ otherwise and $m_1 \in \{l_1,l_2,l_3\}$. 
\newline
\newline
Let $Y$ be $P_{M',G_2}$ and let $C'$ be $V(Y) \cap C$. By definition, $|C'|=\lfloor (c+1)/2 \rfloor$. 
Observe that $|C|-|C'| \geq \lfloor (k+1)/2 \rfloor-\lfloor (k-3)/2 \rfloor$, which is at least 2. 
Therefore, there exist distinct elements $u,v$ of $C \setminus C'$. Let $w \notin \{u,v\}$ be the endpoint $m_{2*\lfloor (c+1)/2 \rfloor}$ of $Y$ in $C'$. 
We derive the lemma from the following claim. 
\newline
\newline
\begin{claim}\label{clm:1}
If $c_1,c_2$ are two distinct elements of $\{u,v,w\}$, in $G_2$,
$c_1$ and $c_2$ have a common neighbour in $\{l_{k-3},l_{k-2},l_{k-1},l_{k}\}$.
\end{claim}
\begin{proof}

We begin with the following observation. 

\begin{observation}\label{obs:1}
For each such $c_j$, if $c_j$ is adjacent to both $l_t$ and $l_{t+1}$ in $G_1$ for some $t \in \{k-3,k-2,k-1\}$,
since $P_{L,G_1}$ is a subpath of the path $P$ of $G_1$, which has consecutive vertices $v_1,...,v_{|V(P)|}$,
the path $P_{(v_1,....,l_t)+(c_j)+(l_{t+1},...,v_{|V(P)|}),G_1}$ is a longer path in $G_1$ than the $|V(P)|$-path $P$,
which is a contradiction as $P$ is a longest path of $G_1$. 
\newline
\newline
\end{observation}
Therefore, in $G_2$, each $c_j$ is adjacent to some element of $\{l_t,l_{t+1}\}$ for each $t \in \{k-3,k-2,k-1\}$.
Suppose that in $G_2$, $c_1$ and $c_2$ have no common neighbour in $\{l_{k-3},l_{k-2}, l_{k-1},l_k\}$.  
Assume without loss of generality that $(c_2,l_{k-2}) \in E(G_1)$. By Observation \ref{obs:1}, $(c_2,l_{k-3}) \in E(G_2)$,
which implies by assumption that $(c_1,l_{k-3}) \in E(G_1)$.
\newline
\newline
Since in $G_2$, $c_2$ has a neighbour in $\{l_{k-2},l_{k-1}\}$, $(c_2,l_{k-1}) \in E(G_2)$. 
Therefore, by assumption, $(c_1,l_{k-1}) \in E(G_1)$, and since in $G_2$,
$c_1$ has a neighbour in $\{l_{k-1},l_k\}$, $(c_1,l_k) \in E(G_2)$, implying by assumption that
$(c_2,l_k) \in E(G_1)$.
\newline
\newline
Therefore, by assumption, $\{(c_2,l_{k-2}),(c_2,l_k),(c_1,l_{k-3}),(c_1,l_{k-1})\} \subseteq E(G_1)$,
implying that the path $P_{(v_1,.....,l_{k-3})+(c_1,l_{k-1},l_{k-2},c_2)+(l_k,...,v_{|V(P)|}),G_1}$ is a well-defined path in $G_1$
of longer length than the $|V(P)|$-path $P$, which is a contradiction as $P$ is a longest path of $G_1$. 
\newline
\newline
Therefore, in $G_2$, $c_1$ and $c_2$ have a common neighbour in $\{l_{k-3},l_{k-2},l_{k-1},l_k\}$,
proving the claim. 
\end{proof}
By Claim \ref{clm:1}, the set $X$ of common neighbours of $u$ and $v$ in $\{l_{k-3},l_{k-2},l_{k-1},l_k\}$ in $G_2$ is non-empty.
\newline
If $|X|>1$, then if in $G_2$, $y$ is a common neighbour of $w$ and $u$ in $\{l_{k-3},l_{k-2},l_{k-1},l_k\}$, there exists an element $x$ of $X$ distinct from $y$,
which implies that the tuple $M=M'+(y,u,x,v)=(m_1,...,w)+(y,u,x,v)$ is such that
$P_{M,G_2}$ exists. Observe that 
\newline 
$|V(P_{M,G_2})|=|V(Y)|+4=2*\lfloor (k-3)/2 \rfloor + 4= 2*\lfloor (k+1)/2 \rfloor$, 
\newline
$x,y \in V(P_{L,G_1})$ and $u,v \in Q$, which implies that
$P_{M,G_2}$ adheres to the given constraints, proving the lemma for this case.
\newline
\newline
Otherwise, if $|X|=1$ and $x$ is the element of $X$, assume without loss of generality that $x \in \{l_{k-3},l_{k-2}\}$.
By Observation \ref{obs:1}, for any $c_1 \in \{u,v,w\}$,
$c_1$ has a neighbour in $\{l_{k-1},l_k\}$.
\newline
Therefore, since $|\{u,v,w\}|=3$ and $|\{l_{k-1},l_k\}|=2$, by the Pigeonhole Principle, 
there exist two elements $c_1,c_2$ of $\{u,v,w\}$ with a common neighbour $y \in \{l_{k-1},l_k\}$. 
By assumption, as $|X|=1$, $w \in \{c_1,c_2\}$. Therefore, if $c_1=w$, 
as $c_2 \in \{u,v\}$ and is hence a neighbour of $x$, if $c_3$ is the non-$c_2$ element of $\{u,v\}$, as $x \neq y$, 
the tuple $M=M'+(y,c_2,x,c_3)=(m_1,...,w)+(y,c_2,x,c_3)$ is such that $P_{M,G_2}$ exists.
\newline
By the argument in the previous paragraph, $P_{M,G_2}$ adheres to the given constraints.
\newline
This proves the lemma. 
\end{proof}
\subsection{Construction of a copy of $T_n$ in $G_2$.}\label{subsec:2.2}

In the following proof, we construct a copy of $T_n$ in $G_2$  for the cases $z=0$,  $z \in (0, (n-1)/4]$ and
$z \in ((n-1)/4, (n-1)/2]$ in Lemma \ref{lem:3}, Lemma \ref{lem:4} and Lemma \ref{lem:5} respectively. 
\newline
\newline

\begin{lemma}\label{lem:3}
For $z=0$, $G_2$ contains a subgraph isomorphic to $T_n$.
\end{lemma}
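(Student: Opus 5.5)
The approach is to exploit the rigidity of a longest path when $z=0$. Here $|V(P)|=n+1$, so $|Q|=r-(n+1)=(n-1)/2$. Write $q=(n-1)/2$.

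\textbf{Step 1: endpoints versus $Q$.} Since $P$ is a longest path of $G_1$, no vertex of $Q$ is adjacent in $G_1$ to $v_1$ or to $v_{n+1}$, since otherwise $P$ extends. Hence in $G_2$ every vertex of $Q$ is adjacent to both $v_1$ and $v_{n+1}$.

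\textbf{Step 2: forbidden edges inside $P$.} The absence of $T_n$ in $G_1$ forbids further edges. For the two $n$-subpaths $(v_1,\dots,v_n)$ and $(v_2,\dots,v_{n+1})$, the midpoint cannot be $G_1$-adjacent to the omitted endpoint. So $v_{(n+1)/2}v_{n+1}$ and $v_{(n+3)/2}v_1$ lie in $E(G_2)$. A longer path also cannot be formed through a $G_1$-edge $v_1v_{n+1}$ combined with any $G_1$-edge to $Q$. I would record all such constraints first.

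\textbf{Step 3: the spine.} I would apply Lemma \ref{lem:2} with $L=(v_1,\dots,v_k)$, $k=n-2$, and $C=Q$. Then $|C|=q=\lfloor (k+1)/2\rfloor$, and by Step 1 every element of $C$ is adjacent to $l_1=v_1$ in $G_2$, as the lemma's proof requires. This gives a $G_2$-path $M=(m_1,\dots,m_{n-1})$ alternating between $\{v_1,\dots,v_{n-2}\}$ (odd positions) and $Q$ (even positions), using all of $Q$ and ending in $m_{n-1}\in Q$. Appending $v_{n+1}$, which is $G_2$-adjacent to all of $Q$, gives a $G_2$-path on exactly $n$ vertices. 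This path is the candidate spine.

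\textbf{Step 4: the pendant (main obstacle).} Because $n\equiv 1 \pmod 4$, the index $(n+1)/2$ is odd, so the midpoint $m_{(n+1)/2}$ is a vertex $v_i$ of $P$. The pendant must then be one of the roughly $q+1$ unused vertices of $P$, including $v_{n-1}$ and $v_n$, and we need some $G_2$-edge from $v_i$ to one of them.

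I would argue by contradiction. Suppose $v_i$ is $G_1$-adjacent to every unused vertex of $P$. Choosing an $n$-vertex subpath of $P$ centred at $v_i$ that avoids one of these unused vertices would make that vertex a $G_1$-pendant, giving $T_n$ in $G_1$. Alternatively, two $G_1$-chords from $v_i$ could be combined with a $G_1$-edge to $Q$ to produce a path longer than $P$. Either outcome is a contradiction. This parallels the argument of Lemma \ref{lem:1}.

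If the midpoint lies too close to an end of $P$ for a centred $n$-subpath to exist, I would exploit the freedom in Lemma \ref{lem:2} instead. That freedom is the choice of $m_1\in\{v_1,v_2,v_3\}$ and the possibility of reversing the whole construction, starting from $v_{n+1}$ rather than $v_1$. Using it, I would shift which vertex of $P$ lands at the midpoint, so that the pendant argument applies.

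I expect this pendant step to be the delicate part. It may require a small case analysis on the position of $v_i$ relative to $v_{(n+1)/2}$ and $v_{(n+3)/2}$, which is where Step 2 should be used.
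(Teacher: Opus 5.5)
Your Steps 1--3 are sound: the application of Lemma~\ref{lem:2} to $L=(v_1,\dots,v_{n-2})$, $C=Q$ is legitimate. Step 4, however, is a genuine gap, and the mechanism you propose for it cannot work.

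The only tool you have for forcing a $G_2$-edge at a path vertex $v_i$ is an $n$-vertex subpath of $P$ centred at $v_i$. Since $|V(P)|=n+1$, such a subpath exists only for $i\in\{(n+1)/2,(n+3)/2\}$. It then forces $G_2$-edges from $v_i$ only to the vertices outside it, i.e.\ to $Q\cup\{v_{n+1}\}$, respectively $Q\cup\{v_1\}$. Your spine consumes all of $Q$, ends at $v_{n+1}$, and contains $v_1=l_1$ (the base case of Lemma~\ref{lem:2} always uses $l_1$). So every leftover vertex lies in $\{v_2,\dots,v_n\}$, inside every centred subpath. Reversing the construction changes nothing. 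The longer-path alternative needs $G_1$-edges into $Q$, which need not exist.

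Concretely, take $n=9$ and let $G_1$ be the path $v_1v_2\cdots v_{10}$ plus the chords $v_5v_2,v_5v_3,v_5v_8,v_5v_9$, with the four vertices of $Q$ isolated. Then $z=0$, and a short degree argument shows $G_1$ has no spanning $T_9$. All $Q$--$V(P)$ edges lie in $G_2$, so $(v_2,q_1,v_1,q_2,v_5,q_3,v_7,q_4)$ is an admissible output of Lemma~\ref{lem:2} (indeed of its proof). Your spine $(v_2,q_1,v_1,q_2,v_5,q_3,v_7,q_4,v_{10})$ then has midpoint $v_5=v_{(n+1)/2}$. This vertex is $G_1$-adjacent to every unused vertex $v_3,v_4,v_6,v_8,v_9$. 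So the supposition in Step 4 holds with no contradiction, even though the midpoint is exactly the central vertex.

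The missing idea is to fix the midpoint in advance rather than try to steer Lemma~\ref{lem:2}. Take $v_{(n+1)/2}$ as the midpoint and keep $v_{n+1}$ off the spine as the pendant. By the argument of your Step 2 (Observation~\ref{obs:3} in the paper), $v_{(n+1)/2}$ is $G_2$-adjacent to every vertex outside $\{v_1,\dots,v_n\}$, that is, to all of $Q$ and to $v_{n+1}$; you recorded only the latter.

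Split $Q$ into two sets of size $(n-1)/4$; this is where $n\equiv 1 \pmod 4$ enters. Then apply Lemma~\ref{lem:2} twice:
\begin{itemize}
\item once on $(v_1,\dots,v_{(n-3)/2})$, where every vertex of $Q$ is $G_2$-adjacent to $v_1$;
\item once on the segment from some $v\in\{v_{n-1},v_n\}$ down to $v_{(n+3)/2}$. Each vertex of $Q$ is $G_2$-adjacent to $v_{n-1}$ or $v_n$ by maximality of $P$, so by pigeonhole some $v$ is $G_2$-adjacent to two vertices of $Q$; put those two in the second half.
\end{itemize}
Each arm has $(n-1)/2$ vertices and ends in $Q$. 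Joining the two arms through $v_{(n+1)/2}$ gives a $G_2$-spine with midpoint $v_{(n+1)/2}$, and $v_{n+1}$ serves as the pendant.
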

\begin{proof}
If $z=0$, then $P=P_{(v_1,....,v_{n+1}),G_1}$ and
$|Q|=(3n+1)/2-(n+1)=(n-1)/2$. 
Since $n>5$ by assumption, it follows that $|Q| \geq 3$ and
hence has 3 distinct elements $x,y,y'$.
Consider the set $S=\{v_{n-1},v_{n}\}$. By Observation \ref{obs:1}, each element of $\{x,y,y'\}$ is adjacent to some element of $S$ in $G_2$, implying by
the Pigeonhole Principle that in $G_2$, some $v \in S$ is adjacent to two elements of $\{x,y,y'\}$, which we may assume without loss of generality to be 
$x$ and $y$. Since $n>5$ by assumption, $(n-1)/4 \geq 2$, implying that there exists a subset $Q_2$ of size $(n-1)/4$ of $Q$ that contains $x$ and $y$ as $(n)mod(4)=1$. 
\newline
\newline
Let $B_2$ be the subpath $P_{(v,....,v_{(n+1)/2+2},v_{(n+1)/2+1}), G_1}$ of $P$. 
By definition, since $v \in \{v_{n-1},v_n\}$, $|V(B_2)| \in \{(n-1)-(n+1)/2, n-(n+1)/2\}=\{(n-3)/2,(n-1)/2\}$, 
which implies that $\lfloor \frac{|V(B_2)|+1}{2} \rfloor = (n-1)/4 = |Q_2|$ as $(n)mod(4)=1$. 
\newline
\newline
Hence, since two elements of $Q_2$ are adjacent to $v$, by Lemma \ref{lem:2}, if $k'= (n-1)/4$, $P_{M_2,G_2}$ exists for some tuple  $M_2=(m_1,....,m_{2k'})=(m_1,...,m_{(n-1)/2})$ of elements of $V(K_r)$ such that $m_j \in V(B_2)$ iff $j$ is odd and $m_j \in Q_2$ otherwise. Let $S_2$ denote the $(n-1)/2$-path $P_{M_2,G_2}$ and let $t_2$ be the endpoint $m_{(n-1)/2}$ of $S_2$ in $Q_2$.
\newline
\newline
Let $Q_1$ be the $(n-1)/4$-set $Q \setminus Q_2$ and let $B_1$ be the subpath $P_{v_1,...,v_{(n-3)/2}, G_1}$ of $P$, which is vertex-disjoint from $B_2$. 
Observe that if some element of $Q_1$ is adjacent to $v_1$ in $G_1$, since $v_1$ is an endpoint of $P$, as $Q_1 \subseteq Q$ is disjoint from $V(P)$ by definition of $Q$,
$P$ is not a longest path of $G_1$, which is a contradiction by assumption. 
\newline
\newline
Therefore, each element of $Q_1$ is adjacent to $v_1$ in $G_2$. Hence, since $|Q_1|=(n-1)/4 \geq 2$ and $ \lfloor \frac{|V(B_1)|+1}{2} \rfloor = (n-1)/4 = |Q_1|$,
by Lemma \ref{lem:2}, if $k'= (n-1)/4$, $P_{M_1,G_2}$ exists for some tuple  $M_1=(m'_1,....,m'_{2k'})=(m'_1,...,m'_{(n-1)/2})$ of elements of $V(K_r)$ such that $m'_j \in V(B_1)$ iff $j$ is odd and $m_j \in Q_1$ otherwise. Let $S_1$ denote the $(n-1)/2$-path $P_{M_1,G_2}$ and let $t_1$ be the endpoint $m'_{(n-1)/2}$ of $S_1$ in $Q_1$.
\newline
\newline
\begin{observation}\label{obs:2}
 $S_1$ and $S_2$ are vertex-disjoint $(n-1)/2$-paths that both do not pass through any element of $\{v_{(n+1)/2},v_{n+1}\}$, which is a subset of $V(P) \setminus (V(B_1) \cup V(B_2))$
and is hence also disjoint from $Q$.
\end{observation}

\begin{observation}\label{obs:3}
If the midpoint $v_{(n+1)/2}$ of the $n$-path $P'=P_{(v_1,...,v_n),G_1}$ of $G_1$ is adjacent to some element $w  \in V(K_r) \setminus V(P')$ in 
$G_1$, the graph $G'=(V(P') \cup \{w\}, E(P') \cup \{v_{(n+1)/2}w\})$ is a copy of $T_n$ in $G_1$ with spine $P'$ and pendant $w$, which is a contradiction,
as by assumption, $T_n$ is not a subgraph of $G_1$. 
\newline
\newline
\end{observation}

Therefore, $v_{(n+1)/2}$ is adjacent to each element of $\{t_1,t_2,v_{n+1}\} \subseteq V(K_r) \setminus V(P')$ in $G_2$. 
Hence, if $M'_2$ is the reverse of the tuple of consecutive vertices $M_2$ of $S_2$, as $M'_2$ has initial element $t_2$ and $M_1$ has final element $t_1$,
by Observation \ref{obs:2}, 
$M_1+(v_{(n+1)/2})+M'_2$ is a tuple $M$ such that the $n$-path $P_{M,G_2}$ exists. 
Let $W$ be $P_{M,G_2}$. 
\newline
\newline
By Observation \ref{obs:2}, the neighbour $v_{n+1}$ of $v_{(n+1)/2}$ in $G_2$ does not lie on $W$.
Observe that since $|M_1|=|V(S_1)|$ and $|M'_2|=|V(S_2)|$ are both equal to $(n-1)/2$,
$W$ is of length $n$ and $v_{(n+1)/2}$ is the midpoint of $W$.
\newline
Therefore, the graph $G=(V(W) \cup \{v_{(n+1)}\}, E(W) \cup \{v_{(n+1)/2}v_{n+1}\})$ is 
a copy of $T_n$ in $G_2$ with spine $W$ and pendant $v_{n+1}$.
Hence, for $z=0$, $G_2$ contains a copy of $T_n$. 

To illustrate this proof, a pictorial representation of the copy $G$ of $T_n$ with pendant $v_{n+1}$ and $P$ where the edges of $P$ and $G$ are indicated in black and red respectively is displayed in Figure \ref{x} below.

\begin{figure}[H]    
\centering
\begin{tikzpicture}[scale=3]

  \draw (0.2,1.86) -- (3.11,1.88);
  \draw[red] (0.2,1.86) -- (0.4,2.2);
  \draw[draw=red] (0.4,2.2) -- (0.54,1.85);
  \draw[red] (0.55,1.86) -- (0.72,2.2);
  \draw[draw=red] (0.71,2.2) -- (0.85,1.87);
  \draw[draw=red] (0.84,1.88) -- (0.85,1.88);
  \draw[draw=red] (0.85,1.87) -- (1.07,2.2);
  \draw[red] (1.07,2.2) -- (1.26,1.87);
  \draw[draw=red] (1.28,1.88) -- (1.5,2.2);
  \draw[draw=red] (1.5,2.2) -- (1.69,1.88);
  \draw[draw=red] (1.7,1.87) -- (1.91,2.2);

  \draw[red] (1.28,1.88) .. controls (1.28,1.86) and (1.27,1.8) .. (1.29,1.75) .. controls (1.3,1.71) and (1.34,1.66) .. (1.36,1.63) .. controls (1.39,1.61) and (1.41,1.61) .. (1.44,1.6) .. controls (1.47,1.58) and (1.48,1.58) .. (1.51,1.57) .. controls (1.55,1.55) and (1.6,1.53) .. (1.63,1.53) .. controls (1.67,1.52) and (1.68,1.51) .. (1.71,1.51) .. controls (1.75,1.51) and (1.81,1.5) .. (1.84,1.5) .. controls (1.88,1.5) and (1.89,1.5) .. (1.92,1.5) .. controls (1.96,1.5) and (2.02,1.5) .. (2.07,1.5) .. controls (2.11,1.5) and (2.16,1.5) .. (2.2,1.5) .. controls (2.23,1.5) and (2.24,1.5) .. (2.29,1.5) .. controls (2.33,1.5) and (2.41,1.5) .. (2.46,1.51) .. controls (2.5,1.51) and (2.52,1.51) .. (2.56,1.52) .. controls (2.6,1.52) and (2.64,1.53) .. (2.68,1.54) .. controls (2.73,1.54) and (2.77,1.55) .. (2.81,1.56) .. controls (2.85,1.57) and (2.89,1.59) .. (2.93,1.61) .. controls (2.97,1.63) and (3.02,1.66) .. (3.05,1.69) .. controls (3.08,1.72) and (3.09,1.77) .. (3.11,1.8) .. controls (3.12,1.83) and (3.12,1.85) .. (3.12,1.86);
  \draw[fill=black] (3.12,1.88) circle (0.04cm);
  \draw[fill=black] (1.28,1.87) circle (0.04cm);
  \draw[red] (2.14,1.87) -- (2.27,2.2);
  \draw[red] (1.91,2.2) -- (2.15,1.87);
  \draw[red] (2.27,2.2) -- (2.53,1.87);
  \node[] at (1.13,1.77) {$v_{\frac{(n+1)}{2}}$};

  \node[] at (3.31,1.8) {$v_{n+1}$};
\end{tikzpicture}
\caption{}
\label{x}
\end{figure}

\end{proof}

\begin{lemma}\label{lem:4}
For $z \in (0,(n-1)/4]$, $G_2$ contains a subgraph isomorphic to $T_n$.
\end{lemma}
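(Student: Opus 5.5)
The plan mirrors the proof of Lemma \ref{lem:3}. The centre will be $v_{(n+1)/2}$ and the pendant $v_{n+1}$. One arm is built from Lemma \ref{lem:2} applied to an initial segment of $P$ together with half of $Q$. The other arm is the $2z$-path of Lemma \ref{lem:1}, extended by a second application of Lemma \ref{lem:2} to a later segment of $P$ and the rest of $Q$.

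Here $|V(P)|=n+1+z$ and $|Q|=(n-1)/2-z$. The basic tool is the argument of Observation \ref{obs:3} applied to every window of $P$. For $0\le i\le z+1$, the vertex $v_{(n+1)/2+i}$ is the midpoint of the $n$-path $P_{(v_{1+i},\dots,v_{n+i}),G_1}$. Hence in $G_2$ it is adjacent to every vertex outside $\{v_{1+i},\dots,v_{n+i}\}$, in particular to all of $Q$. Consequently:
\begin{itemize}
\item $v_{(n+1)/2}$ is $G_2$-adjacent to $v_{n+1}$, to $v_{n+2}$ and to all of $Q$;
\item $v_{(n+1)/2+z}$ is $G_2$-adjacent to all of $Q$;
\item by the endpoint argument from the proof of Lemma \ref{lem:3}, every element of $Q$ is $G_2$-adjacent to $v_1$.
\end{itemize}

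The construction proceeds in four steps.
\begin{enumerate}
\item Let $S_0=P_{V,G_2}$ be the path of Lemma \ref{lem:1}. It runs from $v_{n+2}$ to $v_{(n+1)/2+z}$, has $2z$ vertices, and uses only $\{v_{n+2},\dots,v_{n+z+1}\}\cup\{v_{(n+3)/2},\dots,v_{(n+1)/2+z}\}$.
\item By Observation \ref{obs:1} and pigeonhole, choose $v\in\{v_{n-1},v_n\}$ and two elements of $Q$ that are $G_2$-adjacent to $v$. Put both into a set $Q_2\subseteq Q$ of size $(n-1)/4-z$.
\item Let $B_2=P_{(v,\dots,v_{(n+3)/2+z}),G_1}$. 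Since $n\equiv 1 \pmod 4$, $\lfloor(|V(B_2)|+1)/2\rfloor=(n-1)/4-z$, as in Lemma \ref{lem:3}. Lemma \ref{lem:2} then gives an alternating $G_2$-path $S_2$ with $(n-1)/2-2z$ vertices, ending at some $t_2\in Q_2$.
\item With $Q_1=Q\setminus Q_2$, of size $(n-1)/4$, and $B_1=P_{(v_1,\dots,v_{(n-3)/2}),G_1}$, Lemma \ref{lem:2} gives $S_1$ with $(n-1)/2$ vertices, ending at some $t_1\in Q_1$. The hypothesis of Lemma \ref{lem:2} holds because all of $Q_1$ is $G_2$-adjacent to $v_1$.
\end{enumerate}

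The spine is the tuple $M_1+(v_{(n+1)/2})+V'+M_2'$. Here $V'$ is the reverse of $V$, so $V'$ starts at $v_{(n+1)/2+z}$ and ends at $v_{n+2}$, and $M_2'$ is the reverse of $M_2$ shifted so that the arm reads from the centre outward. More concretely, the second arm is $v_{n+2},\dots,v_{(n+1)/2+z}$ (this is $S_0$), followed by $t_2$, followed by the rest of the reversed $S_2$. This is legitimate because $v_{(n+1)/2}v_{n+2}\in E(G_2)$ and $v_{(n+1)/2+z}t_2\in E(G_2)$. On the other side, $t_1v_{(n+1)/2}\in E(G_2)$. Each arm has exactly $(n-1)/2$ vertices, so $v_{(n+1)/2}$ is the midpoint of a spine of length $n$. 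The pendant is $v_{n+1}$, which none of the pieces uses.

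It remains to check vertex-disjointness. The pieces $B_1$, the middle range of $S_0$, and $B_2$ occupy the consecutive disjoint blocks $[1,(n-3)/2]$, $[(n+3)/2,(n+1)/2+z]$ and $[(n+3)/2+z,n]$ of indices. The block $[n+2,n+z+1]$ is used only by $S_0$, and $Q_1,Q_2$ partition $Q$. Since $z\le (n-1)/4$, $B_2$ is nonempty as long as $z<(n-1)/4$, and the index ranges do not overlap.

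The main obstacle is the edge cases of the second arm, where the hypotheses of Lemma \ref{lem:2} may fail:
\begin{itemize}
\item When $|Q_2|=(n-1)/4-z\le 1$, we cannot place two vertices sharing a neighbour into $Q_2$. There I would use the $k<3$ branch of Lemma \ref{lem:2}, which needs only one $G_2$-neighbour of $l_1$; this exists by Observation \ref{obs:1}.
\item When $z=(n-1)/4$, $Q_2=\emptyset$ and $S_2$ is empty, so the second arm must be exactly $S_0$, which has $2z=(n-1)/2$ vertices. This works directly.
\item The parity and length of $B_2$, depending on whether $v=v_{n-1}$ or $v=v_n$, must be checked to give exactly $(n-1)/4-z$ alternations.
\end{itemize}
I would also double-check that the Lemma \ref{lem:2} path on $B_2$, which starts at some $m_1\in\{l_1,l_2,l_3\}$, never reaches into indices used by $S_0$. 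This is fine because $B_2$ itself avoids them.
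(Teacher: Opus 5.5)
Your construction is the paper's: centre $v_{(n+1)/2}$, pendant $v_{n+1}$, one arm from Lemma~\ref{lem:2} on $B_1$ with $(n-1)/4$ vertices of $Q$, and the other arm the Lemma~\ref{lem:1} path from $v_{n+2}$ to $v_{(n+1)/2+z}$, continued via the window-midpoint edge $v_{(n+1)/2+z}t_2$ into a Lemma~\ref{lem:2} path near $\{v_{n-1},v_n\}$. Choosing the two common $G_2$-neighbours of $v$ from all of $Q$ before splitting off $Q_1$ is a nice simplification. It removes the paper's separate $|Q_3|=2$ case, which arises because the paper fixes $Q_1$ first and then needs $|Q_3|\ge 3$ for pigeonhole.

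However, step 3 fails as written. Your $B_2=P_{(v,\dots,v_{(n+3)/2+z}),G_1}$ only moves the far end of the $B_2$ of Lemma~\ref{lem:3} by $z$. So it has $(n-1)/2-z$ vertices if $v=v_n$ and $(n-3)/2-z$ if $v=v_{n-1}$. Hence $\lfloor(|V(B_2)|+1)/2\rfloor$ equals $(n-1)/4-\lfloor z/2\rfloor$ or $(n-1)/4-\lceil z/2\rceil$ respectively. This is strictly larger than $|Q_2|=(n-1)/4-z$ for every $z\ge 2$, and also for $z=1$ with $v=v_n$. For instance, $n=13$, $z=2$ gives $|V(B_2)|\in\{3,4\}$ but $|Q_2|=1$. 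So the hypothesis $|C|\ge\lfloor(k+1)/2\rfloor$ of Lemma~\ref{lem:2} is violated. Moreover, a path of the Lemma~\ref{lem:2} shape on this $B_2$ would have more than $(n-1)/2-2z$ vertices, making the second arm too long for $v_{(n+1)/2}$ to be the midpoint. Your appeal to the $k<3$ branch when $|Q_2|=1$ also presupposes a $2$-vertex $B_2$, which your definition does not give.

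The repair is to take $B_2$ with exactly $2|Q_2|=(n-1)/2-2z$ vertices starting at $v=v_t$, namely $B_2=P_{(v_t,v_{t-1},\dots,v_{t-(n-1)/2+2z+1}),G_1}$; this is the paper's $B_3$. Its smallest index is at least $(n+1)/2+2z>(n+1)/2+z$, so it is still disjoint from $S_0$. Now $\lfloor(|V(B_2)|+1)/2\rfloor=|Q_2|$, so Lemma~\ref{lem:2} gives exactly $(n-1)/2-2z$ vertices, using the $k<3$ branch when $|Q_2|=1$ and your two common neighbours of $v$ when $|Q_2|\ge 2$. After that, your disjointness and adjacency checks go through.

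Separately, your displayed spine $M_1+(v_{(n+1)/2})+V'+M_2'$, with $V'$ the reverse of $V$, is wrong. It would require $v_{(n+1)/2}v_{(n+1)/2+z}\in E(G_2)$, which fails for $z=1$ since that is an edge of $P$. The correct tuple is $M_1+(v_{(n+1)/2})+V+M_2'$, which is what your ``more concretely'' sentence actually describes.
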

\begin{proof}
Recall that $|Q|=(3n+1)/2-|V(P)|=(3n+1)/2-(n+1)-z=(n-1)/2-z$.
By assumption, $|Q| \geq (n-1)/4$. Let $Q_1$ be a subset of $Q$ of size $(n-1)/4$, which is an integer as $(n)mod(4)=1$. 
Let $B_1$ be the subpath $P_{(v_1,...,v_{(n-3)/2}),G_1}$ of $P$. 
\newline
Recall that each element of $Q_1$ is adjacent to $v_1$ in $G_2$ by the maximality of $P$ in $G_1$, $|Q_1|=\lfloor (|V(P)|+1)/2 \rfloor$,
and $|Q_1| \geq 2$ since $n>5$. 
\newline
Therefore, by Lemma \ref{lem:2}, if $k'= (n-1)/4$, $P_{M_1,G_2}$ exists for some tuple  $M_1=(m'_1,....,m'_{2k'})=(m'_1,...,m'_{(n-1)/2})$ of elements of $V(K_r)$ such that $m'_j \in V(B_1)$ iff $j$ is odd and $m_j \in Q_1$ otherwise. Let $S_1$ denote the $(n-1)/2$-path $P_{M_1,G_2}$ and let $t_1$ be the endpoint $m'_{(n-1)/2}$ of $S_1$ in $Q_1$.
\newline
\newline
Recall that by Lemma \ref{lem:1}, if $z \neq 0$ and  $M_2=(v_{n+2},v_{(n+2)-(n+1)/2})+(v_{n+3},v_{(n+3)-(n+1)/2})+,...,+(v_{n+z},v_{n+z-(n+1)/2})+(v_{n+z+1},v_{n+z+1-(n+1)/2})$,
the $2z$-path $P_{M_2,G_2}$ exists. Let $S_2$ denote the path $P_{M_2,G_2}$. 
\newline
\newline
Suppose that $z=(n-1)/4$. 
\newline
By assumption, $|V(S_2)|=(n-1)/2$.
By Observation \ref{obs:3} in the proof of Lemma \ref{lem:3}, in $G_2$, $v_{(n+1)/2}$ is adjacent to the elements $t_1$ and  $v_{n+2}$ of $V(K_r) \setminus V(P')$, which are respectively the final and initial elements of $M_1$ and $M_2$.
Therefore, since $S_1$ and $S_2$ are vertex-disjoint paths that do not pass through any element of $\{v_{(n+1)/2},v_{n+1}\}$, 
$P_{M_1+(v_{(n+1)/2})+M_2,G_2}$ is a path $W$ of length $n$ in $G_2$ with midpoint $v_{(n+1)/2}$, which is adjacent to $v_{n+1} \in V(K_r) \setminus (V(S_1) \cup V(S_2))$.
\newline
Therefore, the graph $G=(V(W) \cup \{v_{(n+1)}\}, E(W) \cup \{v_{(n+1)/2}v_{n+1}\})$ is 
a copy of $T_n$ in $G_2$ with spine $W$ and pendant $v_{n+1}$.
\newline
Hence, for $z=(n-1)/4$, $G_2$ contains a copy of $T_n$, and we assume henceforth that $z<(n-1)/4$, which implies that $|Q|>(n-1)/4$.
\newline
\newline
Let $Q_3$ be $Q \setminus Q_1$. Observe that since $|Q|=(3n+1)/2-|V(P)|=(3n+1)/2-(n+1+z)=(n-1)/2-z$, $|Q_3|=(n-1)/2-z-(n-1)/4=(n-1)/4-z > 0$.
\newline
We initially prove the following claim.
\newline
\newline
\begin{claim} \label{clm:2}
The lemma holds for $|Q_3| \geq 3$.
\end{claim}
\begin{proof}
In order to obtain a copy of $T_n$ in $G_2$, we use Lemma \ref{lem:2} to construct a path $S_3$ of length $\frac{n-1}{2}-2z$ in $G_2$ with tuple of consecutive vertices $M_{3}$ such that $V(S_3)$ is disjoint from the sets $V(P_{M_1,G_2})$, $V(P_{M_2,G_2})$ and $\{v_{(n+1)/2}, v_{n+1}\}$, and we then prove that there exists a copy of $T_n$ in $G_2$ with spine $P_{M_{1}+(v_{(n+1)/2})+M_{2}+M_{3}, G_2}$ and pendant $v_{n+1}$ attached to $v_{(n+1)/2}$. 
\newline
\newline
Consider the set $S=\{v_{n-1},v_n\}$. Recall that since $P$ is a longest path in $G_1$, each element 
of $Q_3$ (which is a subset of $Q=V(G_1) \setminus V(P)$) is adjacent to some element of $S$ in $G_2$. Therefore, as $|Q_3| \geq 3$, by the Pigeonhole Principle, there exists $v_t$ adjacent to at least two elements of $Q_3$ for some $t \in \{n-1,n\}$ in $G_2$. 
Observe that the length of the subpath $P'_3$ of $P$ from $v_{(n+1)/2+z+1}$ to $v_t$ is at least $(n-1)-((n+1)/2+z)=(n-1)/2-z-1 \geq (n-1)/2-2z$ as 
$z>0$ by assumption. 
Since $(n-1)/4 > z$ by assumption, $(n-1)/2-2z>0$. 
\newline
\newline
Therefore, $t>(n+1)/2+z+1$ and there exists a subpath $B_3=P_{(v_t,...,v_{t-((n-1)/2-2z+1)}),G_1}$ of $P'_3$ of length $(n-1)/2-2z \geq 1$.
\newline
\newline
Observe that $B_3$ is a subpath of $P$ such that $|Q_3|=(n-1)/4-z=\lfloor (|V(B_3)|+1)/2 \rfloor$ and such that the endpoint $v_t$ of $B_3$ is adjacent to at least two elements of $Q_3$.
\newline
\newline
Therefore, by Lemma \ref{lem:2}, if $k'= (n-1)/4-z$, $P_{M'_3,G_2}$ exists for some tuple  $M'_3=(m_1,....,m_{2k'})=(m_1,...,m_{(n-1)/2-2z})$ of elements of $V(K_r)$ such that $m_j \in V(B_3)$ iff $j$ is odd and $m_j \in Q_3$ otherwise. Let $S_3$ denote the $((n-1)/2-2z)$-path $P_{M'_3,G_2}$ and let $t_3$ be the endpoint $m_{(n-1)/2-2z}$ of $S_3$ in $Q_3$.
Let $M_3$ be the reverse of $M'_3$. By definition, $M_3$ has initial vertex $t_3$ and $P_{M'_3,G_2}$ is equivalent to $P_{M_3,G_2}$.
\newline
\newline
Recall that $V(S_3)=((V(S_3) \cap V(P)) \cup Q_3) \subseteq \{v_{{(n+1)/2}+z+1},....,v_n\} \cup Q_3$ and that $V(S_2) \subseteq \{v_{(n+1)/2+1},....,v_{(n+1)/2+z}\} \cup \{v_{(n+2)},...,v_{(n+1)+z}\}$. Therefore, since $Q_3 \subseteq Q$ is disjoint from $P=P_{(v_1,...,v_{(n+1)+z}),G_1}$, it follows that $S_3$ and $S_2$ are vertex-disjoint.
Similarly, since $V(S_1) \subseteq \{v_1,...,v_{(n-1)/2}\} \cup Q_1$ and $Q_1=Q \setminus Q_3$, $S_3$ and $S_1$ are vertex-disjoint.
\newline
\newline
Therefore, for the disjoint paths $S_1=P_{M_1,G_2}, S_2=P_{M_2,G_2}, S_3=P_{M_3,G_2}$, if in $G_2$, the final vertex $t_1$ of $M_1$ and the initial vertex $v_{n+2}$ of $M_2$ are both  adjacent to $v_{(n+1)/2}$
and the final vertex $v_{(n+1)/2+z}$ of $M_2$ is adjacent to the initial vertex $t_3$ of $M_3$, the tuple $X=M_1+(v_{(n+1)/2})+M_2+M_3$ is such that 
$P_{X,G_2}$ is a well-defined path in $G_2$. 
\newline
\newline
If $P_{X,G_2}$ exists, since $|M_1|=|V(S_1)|=(n-1)/2, |M_2|=|V(S_2)|=2z$ and $|M_3|=|V(S_3)|=(n-1)/2-2z$, it follows that $P_{X,G_2}$ is a path of length n with midpoint $v_{(n+1)/2}$. 
\newline
Recall that $v_{n+1}$ does not lie on any path in $\{S_1,S_2,S_3\}$ and is adjacent to $v_{(n+1)/2}$ in $G_2$ by Observation \ref{obs:3} in the proof of Lemma \ref{lem:3}. 
Therefore, the graph $G=(V(P_{X,G_2}) \cup \{v_{n+1}\},E(P_{X,G_2}) \cup \{v_{(n+1)/2}v_{n+1}\})$ is a copy of $T_n$ in $G_2$ with spine $P_{X,G_2}$ and pendant $v_{n+1}$, proving
the claim given that $P_{X,G_2}$ exists.
\newline
\newline
Observe that $t_1 \in Q, v_{n+2} \in V(P) \setminus \{v_1,...,v_n\}$
and that $Q$, $V(P) \setminus \{v_1,...,v_n\}$ are both disjoint from $V(P')=\{v_1,...,v_n\}$.
Therefore, by Observation \ref{obs:3} in the proof of Lemma \ref{lem:3}, $v_{(n+1)/2}$ is adjacent to both $t_1$ and $v_{n+2}$ in $G_2$. 
Hence, to prove the existence of $P_{X,G_2}$, we need only prove that $e=v_{(n+1)/2+z}t_3 \in E(G_2)$.
\newline
Suppose that $e \in E(G_1)$. Observe that $((n+1)/2+z)+(n-1)/2=n+z<(n+1)+z=|V(P)|$ and
$((n+1)/2+z)-(n-1)/2=z+1>0$. 
Therefore, $T'=P_{(v_{((n+1)/2+z)-(n-1)/2},...,v_{((n+1)/2+z)+(n-1)/2}),G_1}$ is a well-defined subpath of P of length $n$ with midpoint
$v_{(n+1)/2+z}$.
\newline
Recall that by definition, $t_3 \in Q$ and hence is not an element of $V(P)=V(K_r) \setminus Q$, which contains $V(T')$.
Therefore, since the midpoint of the $n$-path $T'$ in $G_1$ is adjacent to $t_3$ in $G_1$ by assumption,
there exists a copy of $T_n$ in $G_1$ with spine $T'$ and pendant $t_3$, which is a contradiction as
$G_1$ contains no copy of $T_n$ by assumption. 
\newline
\newline
Therefore, by definition of $G_1$ and $G_2$, $e \in E(G_2)$, proving that $P_{X,G_2}$ exists.
This proves the claim.
For clarification, $P$ and $G$ (with pendant $v_{n+1}$) are represented in Figure \ref{y} in black and red respectively. 
\begin{figure}[H]  
\centering
\begin{tikzpicture}[scale=4]

  \draw (-0.4,1.47) -- (2.71,1.47);
  \draw[red] (-0.4,1.47) -- (-0.31,1.8);
  \draw[draw=red] (-0.31,1.8) -- (-0.22,1.46);
  \draw[red] (-0.22,1.47) -- (-0.13,1.8);
  \draw[red] (-0.13,1.8) -- (-0.04,1.48);
  \draw[red] (-0.04,1.47) -- (0.05,1.8);

  \draw[red] (0.05,1.8) -- (0.15,1.48);

  \draw[red] (0.15,1.47) -- (0.26,1.8);

  \draw[red] (0.25,1.8) -- (0.35,1.48);

  \draw[red] (0.35,1.47) -- (0.46,1.8);

  \draw[red] (0.45,1.8) -- (0.55,1.48);

  \draw[red] (0.55,1.47) -- (0.66,1.8);

  \draw[red] (0.65,1.8) -- (0.75,1.48);

  \draw[red] (0.75,1.47) .. controls (0.76,1.45) and (0.78,1.36) .. (0.82,1.33) .. controls (0.87,1.3) and (0.94,1.28) .. (1,1.27) .. controls (1.05,1.26) and (1.11,1.27) .. (1.16,1.27) .. controls (1.22,1.27) and (1.27,1.27) .. (1.33,1.27) .. controls (1.39,1.27) and (1.45,1.27) .. (1.51,1.27) .. controls (1.58,1.27) and (1.64,1.27) .. (1.7,1.28) .. controls (1.76,1.28) and (1.83,1.29) .. (1.89,1.29) .. controls (1.95,1.29) and (2.01,1.28) .. (2.06,1.29) .. controls (2.11,1.3) and (2.16,1.33) .. (2.21,1.36) .. controls (2.25,1.39) and (2.3,1.45) .. (2.32,1.47);

  \draw[red] (0.45,1.8) -- (0.55,1.48);

  \draw[red] (0.89,1.48) .. controls (1.51,0.87) and (2,0.9) .. (2.31,1.45);
  \draw[red] (0.9,1.48) .. controls (1.8,0.14) and (2.2,0.89) .. (2.47,1.48);
  \draw[red] (2.48,1.48) .. controls (2.4,0.72) and (1.65,-0.2) .. (1.1,1.48);
  \draw[red] (1.11,1.48) .. controls (1.4,-0.2) and (2.4,0.13) .. (2.71,1.48);
  \draw[red] (2.71,1.47) .. controls (2.87,-0.28) and (1,-0.27) .. (1.27,1.47);
  \draw[fill=black] (2.31,1.47) circle (0.03cm);
  \draw[red] (1.27,1.47) -- (1.32,1.8);
  \draw[red] (1.33,1.8) -- (1.4,1.47);
  \draw[red] (1.45,1.81) -- (1.52,1.48);

  \draw[red] (1.4,1.47) -- (1.45,1.8);

  \draw[red] (1.52,1.47) -- (1.6,1.8);

  \draw[red] (1.6,1.8) -- (1.67,1.47);
  \draw[fill=black] (1.27,1.47) circle (0.03cm);
  \draw[red] (0.75,1.47) .. controls (1.08,2.35) and (1.6,2.6) .. (2.11,1.47);
  \draw[fill=black] (2.11,1.47) circle (0.03cm);
  \draw[fill=black] (0.75,1.47) circle (0.03cm);
  \draw[fill=black] (2.71,1.47) circle (0.03cm);
  \node[draw=none, node font=\small] at (0.65,1.36) {$v_{\frac{n+1}{2}}$};
  \node[draw=none, node font=\small] at (1.43,1.36) {$v_{\frac{n+1}{2}+z}$};

  \node[draw=none, node font=\small] at (1.96,1.56) {$v_{n+1 
} $};

  \node[draw=none, node font=\small] at (2.3,1.56) {$v_{n+2}
$};

  \node[draw=none, node font=\small] at (2.9,1.55) {$v_{n+1+z}
  $};
  \draw[fill=black] (1.32,1.8) circle (0.03cm);

  \node[draw=none, node font=\small] at (1.32,1.9) {$t_3 
 $};
\end{tikzpicture}
\caption{}
\label{y}
\end{figure}

\end{proof}

By Claim \ref{clm:2}, to prove the lemma, we need only prove the existence of a copy of $T_n$ in $G_2$
for $|Q_3|=2$ and $|Q_3|=1$. 
Consider the $((n-1)/2-2z)$-subpath $B_3=P_{(v_t,...,v_{t-(n-1)/2-2z+1}),G_1}$ of $P$ for $t=(n-1)$ as defined in Claim \ref{clm:2}. 
Observe that $|V(B_3)|=2*((n-1)/4-z)=2*|Q_3|$.
Therefore, if $|Q_3|=2$, $B_3=P_{(v_t,v_{t-1},v_{t-2},v_{t-3}),G_1}$ and the elements of $Q_3$ are vertices $k_1$,$k_2$.
\newline
\newline
Recall that by the proof of Claim \ref{clm:1}, since in $G_1$, $P$ is a longest path, it follows that in $G_2$, $k_1$ and $k_2$ have a common neighbour 
$v_l \in \{v_t,v_{t-1},v_{t-2},v_{t-3}\}$ and each $k_j$ where $j \in \{1,2\}$ has a neighbour in both $\{v_t,v_{t-1}\}$
and $\{v_{t-2},v_{t-3}\}$. Therefore, if we assume without loss of generality that $v_l \notin \{v_{t-2},v_{t-3}\}$, if $v_m  \in \{v_{t-2},v_{t-3}\}$ is a neighbour of $k_2$ in $G_2$, 
 $P_{(k_1,v_l,k_2,v_m),G_2}$ is a well-defined 4-path which has an endpoint $k_1$ in $Q_3$ and alternates between $Q_3$ and $V(B_3)$. 
Let $T'_3$ be this path and let $M_3$ be $(k_1,v_l,k_2,v_m)$.
\newline
\newline
Observe that $|V(T'_3)|=4=2*|Q_3|=(n-1)/2-2z$. 
Therefore, since $T'_3=P_{(k_1,v_l,k_2,v_m),G_2}$ (where $k_1,k_2 \in Q_3$ and $v_1,v_m \in V(B_3)$) and the path $S_3$ in Claim \ref{clm:2} also has length $(n-1)/2-2z$,
we may apply the argument in Claim \ref{clm:2} (with $k_1$ instead of $t_3$) to prove that $P_{M_1+(v_{(n+1)/2})+M_2+M_3,G_2}$ is a well defined path with midpoint $v_{(n+1)/2}$ and that $G_2$
contains a copy of $T_n$ with spine $P_{M_1+(v_{(n+1)/2})+M_2+M_3,G_2}$ and pendant $v_{n+1}$, proving the lemma for $|Q_3|=2$. 
\newline
\newline
Similarly, for $|Q_3|=1$, $B_3=P_{(v_t,v_{t-1}),G_1}$ and the element $k_1$ of $Q$ has a neighbour $v_l$ in $B_3$, implying the existence of the path 
$T'_3=P_{(k_1,v_l),G_2}$, which alternates between $Q_3$ and $V(B_3)$ and has length equal to $2=2*|Q_3|=(n-1)/2-2z$.
Therefore, by the argument in the previous paragraph, $G_2$ contains a copy of $T_n$ with spine $P_{M_1+(v_{(n+1)/2})+M_2+M_3,G_2}$ where $M_3=(k_1,v_l)$ and pendant $v_{n+1}$, proving the lemma for $|Q_3|=1$. 
\newline
This proves the lemma. 
\end{proof}

\begin{lemma}\label{lem:5}
For $z>(n-1)/4$, $G_2$ contains a subgraph isomorphic to $T_n$.
\end{lemma}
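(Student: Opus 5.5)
We would again build a copy of $T_n$ in $G_2$ with midpoint on the spine $P$. This time the long alternating path of Lemma~\ref{lem:1} supplies the bulk of the spine, because $Q$ is now small: $|Q|=(n-1)/2-z<(n-1)/4$. Write $h=(n+1)/2$ and $C=\{v_{h+1},\dots,v_{h+z}\}$.

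The first step is to record the extra $G_2$-edges available in this regime, by the argument of Lemma~\ref{lem:1} and Observation~\ref{obs:3}. For $v_k\in C$, the window $v_{k-(n-1)/2},\dots,v_{k+(n-1)/2}$ is a well-defined subpath of $P$ of length $n$ with midpoint $v_k$. Hence $v_k$ is adjacent in $G_2$ to every vertex outside this window. These are
\[
Q\cup\{v_1,\dots,v_{k-h}\}\cup\{v_{k+h},\dots,v_{n+1+z}\}.
\]
In particular, every vertex of $C$ is $G_2$-adjacent to all of $Q\cup\{v_1\}$, and $v_k$ is $G_2$-adjacent to $v_j$ whenever $|j-k|\ge h$. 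In addition, $v_h$ is $G_2$-adjacent to everything outside $\{v_1,\dots,v_n\}$, exactly as in Lemma~\ref{lem:3}.

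Next we would assemble the spine as $S_1+(v_h)+S_2+S_3$, following the template of Lemma~\ref{lem:4}, with the roles of the pieces rebalanced.
\begin{itemize}
\item $S_2$ is the $2z$-path $P_{V,G_2}$ of Lemma~\ref{lem:1}. It starts at $v_{n+2}$, which is adjacent to $v_h$, and ends at $v_{h+z}\in C$.
\item Since $2z>(n-1)/2$, the path $S_2$ is now too long for one half of the spine. We would therefore split it. The first $(n-1)/2$ vertices after $v_h$ form the right half of the spine.
\item The remaining $2z-(n-1)/2$ vertices of $S_2$ are reused, via the edges $v_kv_j$ with $|j-k|\ge h$ and the edges from $C$ to $Q\cup\{v_1,\dots,v_{z}\}$, to build the left half.
\item The left half is completed by an alternating path between $Q$ and an initial segment $B_1=P_{(v_1,\dots),G_1}$. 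It is produced by Lemma~\ref{lem:2}, using that every element of $Q$ is $G_2$-adjacent to $v_1$ by maximality of $P$.
\item The two halves are glued through vertices of $C$, which are complete in $G_2$ to $Q\cup\{v_1\}$.
\item The pendant at $v_h$ is $v_{n+1}$, which lies on none of these pieces.
\end{itemize}

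The index accounting would be done as in Lemma~\ref{lem:4}. We need $(n-1)/2$ vertices on each side of $v_h$ and pairwise vertex-disjointness. Since $|Q|+z=(n-1)/2$, the $Q$-vertices together with the $z$ vertices of $C$ have exactly the right count for an alternating left half. This suggests the concrete choice of a left half that alternates between $C$ and $Q\cup\{v_1,\dots\}$. The right half then alternates between $\{v_{n+2},\dots,v_{n+1+z}\}$ and vertices of $P$ near $v_h$ reached by the $|j-k|\ge h$ edges.

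The main obstacle is this reallocation. The vertices of $C$ are used in $S_2$ as written, so the right half must instead be rebuilt from the threshold edges $v_kv_j$ ($j-k\ge h$) without reusing $C$. The plan is to verify that the bipartite graph between $\{v_{n+2},\dots,v_{n+1+z}\}$ and $\{v_2,\dots,v_{z+1}\}$ (where $j-k\ge h$ always holds) contains a zigzag of the needed length starting next to $v_h$. For the extreme values $z=(n-1)/2$ ($Q=\emptyset$) and $z$ just above $(n-1)/4$ we would do a separate small check, as was done for $|Q_3|\in\{1,2\}$ in Lemma~\ref{lem:4}.
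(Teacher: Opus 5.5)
There is a genuine gap. You never actually produce the spine, and the concrete route in your last two paragraphs rests on edges and counts that do not hold up.

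First, the rule ``$v_kv_j\in E(G_2)$ whenever $|j-k|\ge h$'' was derived only when $v_k$ is the centre of an $n$-vertex subpath of $P$. Take $v_{n+1+i}$ with $1\le i\le z$ and $v_j$ with $2\le j<h$. Neither is such a centre: the window of $v_{n+1+i}$ runs past $v_{n+1+z}$, and the window of $v_j$ runs before $v_1$. So the zigzag you hope to find between $\{v_{n+2},\dots,v_{n+1+z}\}$ and $\{v_2,\dots,v_{z+1}\}$ is not justified by the window argument, and you give no other reason for those edges.

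Second, your count fails. Since $z\ge (n-1)/4+1$, we have $|C|=z\ge |Q|+2$, and no edges inside $C$ or inside $Q$ are known. Hence $Q\cup C$ cannot form an alternating left half. Moreover, $v_h$ has no known $G_2$-neighbour in $C$, because every $v_{h+j}$ lies in the window $v_1,\dots,v_n$ of $v_h$. So the left half must begin in $Q\cup\{v_{n+2},\dots,v_{n+1+z}\}$. The extreme cases are only promised, not checked.

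The ``reallocation obstacle'' is not real, and your bullet list already works if taken literally. Put $a=(n-1)/4$.
\begin{itemize}
\item Right half: the first $2a=(n-1)/2$ vertices $v_{n+2},v_{h+1},\dots,v_{n+1+a},v_{h+a}$ of the path of Lemma~\ref{lem:1}.
\item Left half: the remaining tail $v_{n+2+a},v_{h+a+1},\dots,v_{n+1+z},v_{h+z}$, followed by the Lemma~\ref{lem:2} path alternating between $(v_1,\dots,v_{n-2-2z})$ and $Q$. This path lies in $\{v_1,\dots,v_{(n-5)/2}\}\cup Q$.
\end{itemize}
The joint $v_{h+z}m_1$, with $m_1\in\{v_1,v_2,v_3\}$, is a $G_2$-edge because the window of $v_{h+z}$ starts at $v_{z+1}$ and $z\ge 3$. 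By Observation~\ref{obs:3}, $v_h$ is $G_2$-adjacent to $v_{n+2}$, to $v_{n+2+a}$, and to the pendant $v_{n+1}$. The left half has $2(z-a)+2|Q|=(n-1)/2$ vertices, which also covers $z=(n-1)/2$, where $Q=\emptyset$.

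The paper uses the same ingredients in a different order. Its spine is the full $2z$-path of Lemma~\ref{lem:1}, then the edge $v_{h+z}m_1$, then the Lemma~\ref{lem:2} path, then one vertex of $\{v_{(n-3)/2},v_{(n-1)/2}\}$. The midpoint is therefore $v_{n+2+(n-1)/4}$, which lies inside the Lemma~\ref{lem:1} path since $2z\ge (n+1)/2$. The pendant is $v_h$, and the case $z=(n-1)/2$ is handled by appending $v_1$ instead.
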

\begin{proof}
Recall that as $z \neq 0$, by Lemma \ref{lem:1}, if $V$ is the tuple 
$(v_{n+2},v_{(n+2)-(n+1)/2})+(v_{n+3},v_{(n+3)-(n+1)/2})+,...,+$
$+(v_{n+z},v_{n+z-(n+1)/2})+(v_{n+z+1},v_{n+z+1-(n+1)/2})$,
the $2z$-path $P_{V,G_2}$ exists.
\newline
In this lemma, let $M_1$ denote the tuple $V$ and let
$S_1$ denote $P_{V,G_2}$. 
\newline
\newline
Recall that $z \leq (n-1)/2$. 
\newline
Suppose that $z=(n-1)/2$.
Then, $|V(M_1)|=(n-1)$ and the final vertex of $M_1$ is $v_n$.
Observe that since $z=(n-1)/2$, $S=P_{(v_{n-z},....,v_{n+z}),G_1}$ is a subpath of $P=P_{(v_1,....,v_{(n+1)+z}),G_1}$ of length
$n$ with midpoint $v_n$. Since $n>1$ by assumption, $v_1$ does not lie on $S$. 
\newline
Therefore, if $v_{n}v_{1} \in E(G_1)$,
$G_1$ contains a copy of $T_n$ with spine $S$ and pendant $v_1$, which is a contradiction.
Therefore, $v_{n}v_{1} \in E(G_2)$.
\newline
\newline
Hence, since $v_1$ does not lie on $S_1$ by definition, if $M'_1=M_1+(v_1)$,
the path $T'_1=P_{M'_1,G_2}$ is a well-defined $n$-path of $G_2$. 
By definition of $T'_1$, as $(n)mod(4)=1$, the midpoint of $T'_1$ is $v_{(n+2)+(n-1)/4}$.
Since the midpoint $v_{(n+2)+(n-1)/4}$ of the $n$-path $T'_1$ is not contained in $P'=P_{(v_1,...,v_n),G_1}$, by Observation \ref{obs:3}, in $G_2$,
$v_{(n+2)+(n-1)/4}$ is adjacent to $v_{(n+1)/2}$, which does not lie on $T'_1$, which implies that $G=(V(T'_1) \cup \{v_{(n+1)/2}\}, E(T'_1) \cup \{v_{(n+2)+(n-1)/4}v_{(n+1)/2}\})$ is
a copy of $T_n$ in $G_2$ with spine $T'_1$ and pendant $v_{(n+1)/2}$, proving the lemma for $z=(n-1)/2$.
\newline
\newline
Therefore, we may assume henceforth that $z \leq (n-1)/2-1$. 
\newline
\newline
Since $z \leq ((n-1)/2)-1$ by assumption, $(n-2)-2z \geq 2-1=1$, which implies that
the path $B_2=P_{(v_1,...,v_{(n-2)-2z}),G_1}$ is a well-defined subpath of $P$. 
\newline
If $(n-1)-2z>(n-3)/2$, $(n+1)/2>2z$, implying that $(n+1)/4>z$, implying that $(n-1)/4 \geq z$ as $(n)mod(4)=1$, which is a contradiction
as $z>(n-1)/4$ by assumption.
\newline
 Therefore, $(n-1)-2z \leq (n-3)/2$, implying that $(n-2)-2z<(n-3)/2$, implying that 
$B_2$ is a subpath of $P_{(v_1,....,v_{(n-5)/2}),G_1}$, which is well-defined as $n>5$. 
\newline
\newline
Recall that $|Q|=(n-1)/2-z=\lfloor ((n-1)-2z)/2 \rfloor = \lfloor (|V(B_2)|+1)/2 \rfloor$. 
Therefore, since each element of $Q$ is adjacent to the endpoint $v_1$ of $B_2$, by Lemma \ref{lem:2}, if $k'=\lfloor ((n-2)-2z+1)/2 \rfloor$, $P_{M_2,G_2}$ exists for some tuple  $M_2=(m_1,....,m_{2k'})$ of elements of $V(K_r)$ such that $m_j \in V(B_2)$ iff $j$ is odd and $m_j \in Q$ otherwise
and $m_1 \in \{v_1,v_2,v_3\}$.
\newline
\newline
Recall that since $P$ is a longest path of $G_1$, $m_{2k'}$ is adjacent in $G_2$ to at least one element $v$ of the set $\{v_{(n-3)/2},v_{(n-1)/2}\} \subseteq V(K_r) \setminus V(P_{M_2,G_2})$, which consists of two consecutive vertices of $P$. 
Therefore, since $v$ does not lie on $P_{M_2,G_2}$, $S_2=P_{M_2+(v),G_2}$ is a well-defined $(n-2z)$-path in $G_2$. 
\newline
\newline
Consider the final vertex $v_{n+z+1-(n+1)/2}$ of $M_1$ and let $k$ be $n+z+1-(n+1)/2$. 
\newline
Since $((n+z+1)-(n+1)/2)+(n-1)/2=n+z<(n+1)+z=|V(P)|$ and $((n+z+1)-(n+1)/2)-(n-1)/2=z+1>(n-1)/4+1 \geq (9-1)/4+1=3$ as $n>5$ and $(n)mod(4)=1$, $S=P_{(v_{k-(n-1)/2},....,v_{k+(n-1)/2}),G_1}$ is a well-defined subpath of $P$ of length $n$ with midpoint $v_k$ that does not pass through any vertex in $\{v_1,v_2,v_3\}$.
\newline
\newline
Therefore, for each vertex $w \in \{v_1,v_2,v_3\}$, if $v_k$ is adjacent to $w$, $G_1$ contains a copy of $T_n$ with spine $S$ and pendant $w$, which is a contradiction as by assumption, $G_1$ contains no copy of $T_n$. Therefore, in $G_2$, the final vertex $v_k$ of the tuple $M_1$ of consecutive vertices of $S_1$ is adjacent to the initial vertex $m_1$ of the tuple of consecutive vertices $M_{2}+(v)$ of $S_2$, as $m_{1} \in \{v_1,v_2,v_3\}$.
Recall that $S_1$ and $S_2$ are disjoint.
Hence, for the tuple $M=M_1+M_2+(v)$, $P_{M,G_2}$ exists. By definition, $|M|=|M_1|+|M_2+(v)|=2z+(n-2z)=n$, which implies that $P_{M,G_2}$ is an $n$-path of $G_2$.
Since $v_{(n+1)/2}$ does not lie on $S_1$ or $S_2$, $v_{(n+1)/2}$ does not lie on $P_{M,G_2}$.
\newline
\newline
Recall that since $z \geq (n-1)/4+1$, $|M_1| \geq (n+1)/2$. Therefore the midpoint of $P_{M,G_2}$ is the entry $M_{1}((n+1)/2)$ of the tuple $M_1$, which is $v_{(n+2)+(n-1)/4}$.
Recall that by Observation \ref{obs:3}, $v_{(n+2)+(n-1)/4}v_{(n+1)/2} \in E(G_2)$. 
Hence, the graph $G=(V(P_{M,G_2}) \cup \{v_{(n+1)/2}\}$, $E(P_{M,G_2}) \cup \{v_{(n+2)+(n-1)/4}v_{(n+1)/2}\})$ is a copy of $T_n$ in $G_2$ with spine $P_{M,G_2}$ and pendant $v_{(n+1)/2}$, proving the lemma for $z<(n-1)/2$.
\newline
This proves the lemma. 
For clarification, $P$ and the copy $G$ of $T_n$ with pendant $v_{\frac{n+1}{2}}$ are represented pictorially in Figure \ref{z}.
\begin{figure}[H]   
\centering
\begin{tikzpicture}[scale=4]

  \draw (-0.59,2) -- (2.72,2);
  \draw[red] (2,2) .. controls (1.38,1.82) and (0.76,1.82) .. (0.14,2);
  \draw[red] (0.16,2) .. controls (1.08,1.28) and (2.22,2) .. (2.1,2);
  \draw[red] (2.12,2) .. controls (2.2,2) and (1.06,0.88) .. (0.4,2);
  \draw[red] (0.4,2) .. controls (0.6,1.12) and (1.67,1.09) .. (2.3,2);
  \draw[red] (2.3,2) .. controls (1.9,0.8) and (0.94,1.2) .. (0.6,2);
  \draw[red] (0.6,2) .. controls (1.74,1.09) and (2.14,1.11) .. (2.51,2);
  \draw[red] (2.51,2) .. controls (2.47,0.73) and (1.4,0.8) .. (0.86,2);
  \draw[red] (0.87,2) .. controls (1.06,0.4) and (2.7,0.7) .. (2.72,2);

  \draw[red] (2.72,2) .. controls (2.67,1.71) and (1.52,1.07) .. (1.11,2);
  \draw[red] (1.12,2) .. controls (0.4,1.14) and (-0.46,1.8) .. (-0.47,2);
  \draw[red] (2.51,2) .. controls (1.68,2.72) and (0.84,2.72) .. (0,2);
  \draw[red] (-0.47,2) -- (-0.3,2.52);
  \draw[draw=red] (-0.3,2.52) -- (-0.17,2);
  \draw[fill=black] (2,2) circle (0.03cm);
  \draw[fill=black] (2.51,2) circle (0.03cm);
  \draw[fill=black] (0,2) circle (0.03cm);
  \node[draw=none, node font=\small] at (0,1.85) {$v_{\frac{n+1}{2}}$};

  \node[draw=none, node font=\small] at (1.99,2.11) {$v_{n+2}
$};

  \node[draw=none, node font=\small] at (2.63,2.1) {$v_{n+2+\frac{n-1}{4}}$};
	
\end{tikzpicture}
\caption{}
\label{z}
\end{figure}

\end{proof}

\section{$R(T_n,T_n) \geq \frac{3n+1}{2}$ for all $n$ and $R(T_n,T_n)=\frac{3n+1}{2}$ if $n \leq 5$.} \label{sec:3}

In order to prove that $R(T_n,T_n) \geq \frac{3n+1}{2}$, by definition, for all $n$, we need only exhibit a 2-edge-colouring of $K_{\frac{3n+1}{2}-1}=K_{\frac{3n-1}{2}}$ such that
the graph induced by each colour class contains no copy of $T_n$. 
\newline
\newline
Let $\{u_1,....,u_{\frac{3n-1}{2}}\}$ be the vertex set $V(K_{\frac{3n-1}{2}})$ of $K_{\frac{3n-1}{2}}$, let $U_1$ be $\{u_1,....,u_n\}$,
let $U_2$ be $\{u_{n+1},...,u_{\frac{3n-1}{2}}\}$ and let $C$ be the set of edges of $K_{\frac{3n-1}{2}}$ with one endpoint in $U_j$ for each $j \in \{1,2\}$. 
\newline
Let $f$ be a 2-edge-colouring of $K_{\frac{3n-1}{2}}$ with colour classes $C_1=E(U_1) \cup E(U_2)$, $C_2=C$, and for each $x \in \{1,2\}$, let $G_x$ be $(V(K_{\frac{3n-1}{2}}),C_x)$.
\newline 
Since the components of $G_1$ have respective sizes $n$ and $(n-1)/2$ and $T_n$ has $n+1$ vertices, $G_1$ contains no copy of $T_n$.
Observe that $G_2$ is isomorphic to the biclique $K_{n,\frac{n-1}{2}}$ with partite sets $U_1$ and $U_2$.
\newline
Suppose that $G_2$ contains a copy of $T_n$ with spine $S=P_{(t_1,...,t_n), G_2}$ and pendant $t$. 
If $t_1 \in U_j$ for some fixed $j \in \{1,2\}$, as $G_2$ is a biclique with partite sets $U_1,U_2$, we may straightforwardly prove
by induction on $k$ that $t_k \in U_j$ iff $k$ is odd. 
\newline
\newline
Therefore, if $t_1 \in U_1$, since $(n+1)/2$ is odd as $(n)mod(4)=1$, the midpoint $t_{(n+1)/2}$ of $S$ is an element of $U_1$. 
\newline
Therefore, since $t$ does not lie on $S$ and is adjacent to $t_{(n+1)/2}$, $t \in U_2 \setminus V(S)$, which implies that $U_2 \setminus V(S)$ is non-empty.
However, since $n$ is odd, $U_2 \cap V(S)=\{t_2,t_4,...,t_{n-1}\}$, which has order $(n-1)/2=|U_2|$. Therefore, $|U_2 \setminus V(S)|=|U_2 \setminus (U_2 \cap V(S))|=(n-1)/2-(n-1)/2=0$,
implying that $U_2 \setminus S$ is empty, which is a contradiction.
\newline
\newline
Therefore, $t_1 \in U_2$, which implies that $U_2=\{t_1,t_3,....,t_n\}$, which has order $(n+1)/2$ since $n$ is odd, which is impossible since
$U_2=\{u_{n+1},....,u_{\frac{3n-1}{2}}\}$ and is hence a set of order $(n-1)/2$.
\newline
\newline
Hence, it follows that the graph induced by each colour class of $f$ contains no copy of $T_n$, which proves that 
$R(T_n,T_n) \geq \frac{3n+1}{2}$.
\newline
\newline
If $n \leq 5$, since $(n)mod(4)=1$, $n \in \{1,5\}$.
If $n=1$, then $K_{(3n+1)/2}=K_{2}$. Since the sole edge $e$ of $K_{2}$ induces
a copy of $T_1=T_n$ and is an element of exactly one colour class under any 2-edge-colouring of $K_{(3n+1)/2}$,
it follows that $R(T_1,T_1)=2$.
\newline
\newline
If $n=5$, $R(P_6,P_6)=(3*6-2)/2=8=(3*5+1)/2=(3n+1)/2$ (\cite{PathRamseyNumber}). 
Therefore, if $f$ is a 2-edge-colouring of $K_{(3n+1)/2}$ with colour classes
that respectively induce subgraphs $G_1,G_2$,
we may assume without loss of generality that $G_1$ contains a longest path $P=P_{(v_1,....,v_k),G_1}$ with $k \geq 6$ 
and that $G_1$ contains no copy of $T_5$. 
\newline
\newline
If $k=6$, let $u,w$ be the two elements of $K_{(3n+1)/2} \setminus V(P)$. 
Since $P$ is a longest path and $G_1$ contains no copy of $T_5$, $\{v_{3}v_{6},v_{6}w,wv_{1},v_{1}u\} \subseteq E(G_2)$, 
implying that $P_{(v_3,v_6,w,v_{1},u),G_2}$ exists. If $wv_4 \in E(G_1)$,
$G_1$ has a copy of $T_5$ with spine $P_{(v_2,v_3,v_4,v_5,v_6),G_1}$ and pendant $w$, which is a contradiction. 
Therefore, $wv_4 \in E(G_2)$, implying that $G_2$ contains a copy of $T_5$ with spine $P_{(v_3,v_6,w,v_{1},u),G_2}$ and pendant $v_4$.
\newline
\newline
If $k=7$, if $u$ is the element of $K_{(3n+1)/2} \setminus V(P)$, since $P$ is a longest path and $G_1$ contains no copy of $T_5$, 
$\{v_{6}v_{3},v_{3}v_{7},v_{7}v_{4},v_{4}v_{1}\} \subseteq E(G_2)$. Therefore, since $v_{7}u \in E(G_2)$ as $P$ is a longest path of $G_1$ with endpoint $v_{7}$,
$G_2$ contains a copy of $T_5$ with spine $P_{(v_6,v_3,v_7,v_4,v_1),G_2}$ and pendant $u$. 
\newline
\newline
If $k=8$, since $G_1$ contains no copy of $T_5$, $\{v_{7}v_{4},v_{4}v_{8},v_{8}v_{5},v_{5}v_{1},v_{8}v_{3}\} \subseteq E(G_2)$, implying that $G_2$ contains a copy of $T_5$ with spine
$P_{(v_7,v_4,v_8,v_5,v_1),G_2}$ and pendant $v_{3}$.
\newline
\newline
Therefore, we have proved that $R(T_5,T_5)=(3*5+1)/2$, proving the statement for $n=5$. 

\section*{Acknowledgements}
I thank Professor Vadim Lozin for his endorsement of this paper for submission to arXiv and for bringing paper \cite{Asymptote} to my attention.


\begin{thebibliography}{1}

\bibitem{Caterpillar1}
G.~Chen and S.~Liu.
\newblock On the {R}amsey numbers of even-linked double stars.
\newblock {\em Australasian Journal of Combinatorics}, 94:385--402, 2026.

\bibitem{Broom1}
P.~Erd{\H o}s, R.~Faudree, C.~Rousseau and R.~Schelp.
\newblock Ramsey numbers for brooms.
\newblock {\em Congressus Numerantium}, 35:283--293, 1982.

\bibitem{PathRamseyNumber}
L.~Gerencs{\'e}r and A.~Gy{\'a}rf{\'a}s.
\newblock On {R}amsey-type problems.
\newblock {\em Ann. Univ. Sci. Budapest. E{\"o}tv{\"o}s Sect. Math},
  10:167--170, 1967.




\bibitem{Broom2}
Y.~Li and P.~Yu.
\newblock All {R}amsey {N}umbers for {B}rooms in {G}raphs.
\newblock {\em Electronic Journal of Combinatorics}, 2016.

\bibitem{Notes}
V.~Lozin.
\newblock Graph {T}heory {N}otes.
\newblock Lecture notes, Institute of {M}athematics, {U}niversity of {W}arwick,
  {UK}, 2018.

\bibitem{Asymptote}
R.~Montgomery, M.~Pavez-Sign{\'e} and J.~Yan.
\newblock Ramsey numbers of trees.
\newblock arXiv preprint arXiv:2509.07934, 2025.

\bibitem{Caterpillar2}
Z.H. Sun.
\newblock Ramsey numbers for trees {II}.
\newblock {\em Czechoslovak Mathematical Journal}, 71:351--372, 2021.

\bibitem{Book}
D.B. West.
\newblock {\em Introduction to {G}raph {T}heory}.
\newblock Prentice {H}all, 1996.



\end{thebibliography}
 
\end{document}